\documentclass{article}
\usepackage{tikz}
\usetikzlibrary{patterns}
\usepackage{graphicx,amsmath,amsfonts,amsthm}
\newtheorem{proposition}{Proposition}[section]

\newtheorem{theorem}[proposition]{Theorem}
\theoremstyle{definition}
\newtheorem{remark}[proposition]{Remark}

\newcommand{\dt}{\mathrm{\Delta}t}
\newcommand{\pder}[2]{\frac{\partial #1}{\partial #2}}
\newcommand{\diag}{\mathsf{diag}}
\newcommand{\tridiag}{\mathsf{tridiag}}
\newcommand{\N}{\mathbb{N}}

\newcommand{\tr}{\mathsf{T}}

\renewcommand{\vec}[1]{\mathbf{#1}}

\title{Preconditioned fully implicit PDE solvers for monument conservation}
\author{Matteo Semplice%
\thanks{Dipartimento di Fisica e Matematica,
Universit\`a dell'Insubria - Sede di Como, Via Valleggio 11, 22100
Como, Italy, \textbf{E-mail:} {\sl matteo.semplice@uninsubria.it}
}}
 
\date{\today}


\begin{document}
\maketitle

\begin{abstract}
  Mathematical models for the description, in a quantitative way, of
  the damages induced on the monuments by the action of specific
  pollutants are often systems of nonlinear, possibly degenerate,
  parabolic equations. Although some the asymptotic properties of the
  solutions are known, for a short window of time, one needs a
  numerical approximation scheme in order to have a quantitative
  forecast at any time of interest.

  In this paper a fully implicit numerical method is proposed,
  analyzed and numerically tested for parabolic equations of porous
  media type and on a systems of two PDEs that models the sulfation of
  marble in monuments.  Due to the nonlinear nature of the underlying
  mathematical model, the use of a fixed point scheme is required and
  every step implies the solution of large, locally structured, linear
  systems. A special effort is devoted to the spectral analysis of the
  relevant matrices and to the design of appropriate iterative or
  multi-iterative solvers, with special attention to preconditioned
  Krylov methods and to multigrid procedures. Numerical experiments
  for the validation of the analysis complement this contribution.
\end{abstract}

\section{Introduction}
The problem of monitoring, preserving and, when needed, restoring
monuments and works of art has become more and more relevant in recent
years for the conservation of our cultural heritage, after the
recognition of the negative effects of some pollutants on the
monuments. Numerous studies made researchers and restorers more and
more aware that gaseous pollutants, atmospheric particulate matter,
and some microorganisms can adversely affect the status of our
monuments.  In order to monitor the cultural heritage and for
precisely programming the restoration works, it is of paramount
importance to be able to accurately assess the status of each
monument. Along these lines, quantitative methods are emerging and
making their way into the practice of preservation and restoration.
These have the obvious advantage of allowing fair comparison of the
state of different monuments, supporting the decision process on what
to restore, clean, etc and on the relative urgency of each case.

As an example, consider the ``black crusts'' that grow on marble
surfaces as an effect of sulfation of the carbonate stone that is
turned into gypsum when reacting with $\mathrm{SO_2}$ in a moist
environment. Since urban concentrations of $\mathrm{SO_2}$ can be
nowadays more than $100$ times higher than the atmospheric basal
values, this effect has become very important in the last
decades. Sulfation can cause permanent damage to the monuments because
gypsum crusts can be easily eroded by rain or (when located in
protected areas) can become unaesthetically black including
particulate matter from the atmosphere and eventually exfoliate
\cite{Hay82,GKPC89,BLR00}.

A better scheduling of cleaning or deeper restoration can be devised
if the thickness (and composition) of the crust can be forecast in
quantitative way, providing a way to compute and thus predict the time
evolution of the crust.  The most common quantitative evaluation of
the sulfation phenomena that is used in practice consists in assuming
that the thickness is directly proportional to the length of time of
the exposure to the pollutants, with a proportionality coefficient
obtained by fitting data from a large number of monuments
\cite{Lip89}.  Although this may give an average indication good
enough for civil buildings, the uniqueness and cultural importance of
a work of art calls for a more detailed analysis, that can take into
account the local environment to which the monument is exposed.

A mathematical model of the sulfation of marble based on the chemical
reactions involved was developed by Natalini and coworkers
\cite{ADN:sulfation, GN05} at IAC-CNR (Rome) and tested against
experiments, see \cite{GSNF08}.

It is worthwhile to remark that the mathematical model is able to
provide new information, which partly contradicts the most common
quantitative evaluation methods based on data fitting. In particular
(see \cite{GN07}) the asymptotic study of the equations in a one
dimensional setting reveals that for large times the thickness of the
gypsum crust does not grow proportionally to the elapsed time as in
the Lipfert formula, but proportionally to its square root: the speed
of growth of the crust is significantly reduced as time goes
on. Clearly this means that a complete removal of the crust will speed
up the damage and calls for study of optimal strategies for the
periodic partial crust removal.

However the asymptotic analysis does not give enough information on
what happens for short times and moreover the study is not yet
available for complex geometries. For example on a corner stone,
$\mathrm{SO_2}$ penetrates the marble from two sides: how does the
crust grow? Does it get rounded? How much? And, more importantly, what
about the fine particulars of decorations or statues? In some cases
sulfation caused an almost complete loss of details: can the model
predict the thickness of the crust there and allow the scheduling of
an optimal conservation strategy?  In order to answer the previous
questions, we need a numerical method to solve the equations of the
model developed by the group in Rome (see \cite{ADN:sulfation}).  This
is a system of two equations, one of which is nonlinear of parabolic
type.

In this paper we generalize and apply novel numerical techniques
studied in \cite{SDS:degdiff} to integrate for long times nonlinear,
possibly degenerate, parabolic equations like those appearing in the
model by \cite{ADN:sulfation}. We wish to point out that the
techniques developed here have applications that go beyond the
aforementioned model. For example, in the area of {\em planned
  conservation}, they could be adapted to numerically investigate the
more complete sulfation model described in \cite{AFNT07:model} and the
consolidation model presented in \cite{CGNNS:teos}.

In the literature, degenerate parabolic equations have been
discretized mainly using explicit or semi-implicit methods, thus
avoiding to solve the nonlinear equation arising from the elliptic
operator.  A remarkable class of methods arise directly from the
so-called non-linear Chernoff formula \cite{BP72} for time
advancement, coupling it with a spatial discretization: for finite
differences this was started in \cite{BBR79} and for finite elements
by \cite{MNV87}.  For example the numerical scheme analysed in \cite
{ADN:sulfation} for integrating the sulfation model belongs to the
class of semi-implicit methods.  More recently, another class related
to the relaxation approximation emerged: such numerical procedures
exploit high order non-oscillatory methods typical of the
discretization of conservation laws and their convergence can be
proved making use of semigroup arguments similar to those relevant for
proving the Chernoff formula \cite{CNPS07:degdiff}.

\newcommand{\Lop}{\mathcal{L}}

In this paper we consider two fully-implicit discretizations in
time, thus solving a nonlinear system at each time step. In order to
fix ideas, consider the parabolic equation
\begin{equation}\label{eq:pm}
\pder{u}{t} = \nabla\cdot\left(D(u)\nabla u\right),
\end{equation}
where $D(u)$ is a non-negative differentiable function and denote with
$\Lop_D(u)$ the elliptic operator on the right hand side. Considering
a time discretization such that $\dt=t^n-t^{n-1}$ and denoting with
$U$ the numerical solution, we employ the 
(first order accurate) Implicit Euler scheme
\begin{equation}\label{eq:EI}
U(t^n,x)-\dt \Lop_D(U(t^n,x)) = U(t^{n-1},x)
\end{equation}
and the (second order accurate) Crank-Nicholson scheme
\begin{equation}\label{eq:CN}
U(t^n,x)-\frac\dt2 \Lop_D(U(t^n,x)) = 
U(t^{n-1},x) +\frac\dt2 \Lop_D(U(t^{n-1},x))
\end{equation}
(Note that \eqref{eq:EI} is also known as the Crandall-Liggett
formula, after \cite{CL71}.)

The computation of $U(t^n,x)$ with \eqref{eq:EI} or \eqref{eq:CN}
requires to solve a nonlinear equation whose form is determined by the
elliptic operator and the nonlinear function $D(u)$, but the
convergence is guaranteed without restrictions on the time step $\dt$.
Due to the nonlinear nature of the underlying mathematical model, the
use of a fixed point scheme is required and the choice of the faster
Newton-like methods implies the solution at every step of large,
locally structured (in the sense of Tilli, see \cite{Tilli98} and
\cite{Serra06:glt}) linear systems. A special effort was devoted in
\cite{SDS:degdiff} to the spectral analysis of the relevant matrices
and to the design of appropriate iterative or multi-iterative solvers
(see \cite{Serra93:multi}), with special attention to preconditioned
Krylov methods and to multigrid procedures (see
\cite{Greenbaum:book,Saad:book,Hackbusch:book,Trottenberg:book} and
references therein for a general treatment of iterative solvers). In
this paper we will argue that those methods can be extended to the
case of systems and perform numerical tests on the model of
\cite{ADN:sulfation}.

The paper is organised as follows.  In section \ref{sec:scalar} we
recall the results of \cite{SDS:degdiff} on scalar equations, and
extend them to the case of a scheme which is second order in time. In
Section \ref{sec:sulfation} we introduce the sulfation model, the
implicit numerical schemes and the preconditioners for the linear
systems. Both sections are complemented by numerical experiments in
one and two spatial dimensions. Finally in Section
\ref{sec:conclusions} we point out some possible developments of this
work.

\section{Scalar equations}
\label{sec:scalar}
In this section, we consider the case of a single equation of the
porous media type, namely \eqref{eq:pm}, where $D(u)$ is a
non-negative differentiable function.  The parabolic equation is of
degenerate type whenever $D(u)$ vanishes for some values of $u$. For
the convergence analysis of our numerical methods, we will require
that $D(u)$ is at least continuously differentiable, while the
existence of solutions is guaranteed under the milder assumption of
continuity \cite{VazquezBOOK}. Most applications of the porous media
equation involve $D(u)=u^m$ for some positive $m$.

For this particular choice, the following self-similar exact solutions
have been computed by Barenblatt and Pattle (see \cite{VazquezBOOK}):
\begin{equation}
  \label{eq:barenblatta}
  u(t,\vec{x}) = 
  t^{-\alpha}\left[
    1-k\left(\frac{|\vec{x}|}{t^{\alpha/d}}\right)^2
  \right]_+^{\frac1{m-1}}
  \qquad
  \text{for } t>0, \vec{x}\in\mathbb{R}^d
\end{equation}
where $|\vec{x}|=\sqrt{\sum_1^d x_i^2}$ and
\(\alpha=\tfrac{d}{d(m-1)+2}\), \(k=\alpha\tfrac{m-1}{2md} \).  These
solutions are singular for $t=0$, but for $t>0$ represent important
reference cases both for the analysis of the solutions of
\eqref{eq:pm} and for numerical tests.

Here, results of \cite{SDS:degdiff} on scalar equations are recalled
and extended to the case of a scheme which is second order in time.
They will be used in section \ref{sec:sulfation}, which deals with a
system of two equations, since the linear systems arising there
include, as a subsystem, those considered in this section.

\subsection{Numerical scheme}
\label{ssec:scalar1}
In order to obtain a numerical scheme for approximating the solutions
of \eqref{eq:pm}, we first discretize the time variable with the
Crank-Nicholson formula \eqref{eq:CN}, generalising the simpler case
of Implicit Euler that was considered in \cite{SDS:degdiff}.  We point
out that searching for high order schemes for equation \eqref{eq:pm}
would seem at first useless, since the exact solution of the equation
are in general continuous but not differentiable, so any scheme would
converge in theory with order $1$.  However, in practice often the
exact solution is piecewise regular, allowing an higher order scheme
to converge faster than first order and in any case to achieve better
errors than \eqref{eq:EI} at a given spatial resolution, even if the
theoretical order of convergence is not reached.

We then complete the discretization by considering the points
$x_k=a+kh$ in the spatial domain $[a,b]$, where $h=(b-a)/(N+1)$ and
$k=0,\ldots,N+1$, and approximating the one-dimensional Laplacian
operator with the usual 3-point finite difference formula, i.e.
\begin{multline}\label{eq:FDlaplacian}
\left.\pder{}{x}\left(D(u)\tfrac{\partial u}{\partial x}
\right)\right|_j 
= \frac{D(u)_{j+1/2} \left.\tfrac{\partial u}{\partial x}\right|_{j+1/2}
-D(u)_{j-1/2} \left.\tfrac{\partial u}{\partial x}\right|_{j-1/2}}{h}+o(1)\\
=\frac{D(u)_{j+1/2} (u_{j+1}-u_j)
-D(u)_{j-1/2} (u_j-u_{j-1})}{h^2}+o(1)\\
=\frac{(D(u_{j+1})+D(u_j)) (u_{j+1}-u_j) -(D(u_j)+D(u_{j-1}))
(u_j-u_{j-1})}{2h^2}+o(1)
\end{multline}

In order to write down compactly the equations for the numerical
scheme, we collect in a vector $\vec{u}^n$ all the unknown values
$u^n_j=u^n(x_j)$. For example when Dirichlet boundary conditions are
considered, since $u_0$ and $u_{N+1}$ are known, $\vec{u}^n$ has $N$
elements, namely $u^n_1,\ldots,u^n_N$.

We denote by $\tridiag^N_k[\beta_k, \alpha_k,
\gamma_k]$ a square tridiagonal matrix of order $N$ with entries
$\beta_k$ on the lower diagonal, $k=2,\cdots,N$, $\alpha_k$ on the
main diagonal, $k=1,\cdots,N$, and $\gamma_k$ on the upper diagonal,
$k=1,\cdots,N-1$.
We also denote with
$\diag^N[\alpha_k]$ the $N\times{N}$ square diagonal matrix with
$\alpha_k$ on the $k^{\text{th}}$ row.
With this notation, recalling that \eqref{eq:CN} is a second order
approximation, 
\begin{equation}
\label{eq:CNfd}
\vec{u}^n - \vec{u}^{n-1} = \frac12\frac{\dt}{h^2}
L_{D(\vec{u}^n)}\vec{u}^n 
\frac12\frac{\dt}{h^2} L_{D(\vec{u}^{n-1})}\vec{u}^{n-1} + o(\dt^2+h^2)
\end{equation}
where
\begin{equation}
\label{eq:Ld}
L_{D(\vec{u})} =\tridiag^N_k [D_{k-1/2},
-D_{k-1/2}-D_{k+1/2}, D_{k+1/2}] 
\end{equation}
and
\begin{equation*}
D_{j+1/2}=\frac{D(u_{j+1})+D(u_{j-1})}2
\,,\; j=0,\ldots,N
\end{equation*}

In two dimensions, on a finite grid composed by the $(N+2)\times(N+2)$
points $\vec{x}_{i,j}=a+ih\vec{e}_1+jh\vec{e}_2$, where $a$ is the
lower left corner of the domain, $h$ the discretization parameter,
$\vec{e}_l$ ($l=1,2$) unit vectors along the coordinate axis and
$i,j\in\N$, the matrix $L_{D(\vec{u})}$ approximating
$\nabla\cdot\left(D(u^n)\nabla u^n\right)$ is pentadiagonal. When
considering Dirichlet boundary conditions, adopting the usual
lexicographic ordering of the unknowns $u^n_{i,j}$, $L_{D(\vec{u})}$
is a $N^2\times{N^2}$ square matrix and nonzero entries can be found
only on the main diagonal, on the $1^{\text{st}}$ and $N^{\text{th}}$
upper and lower diagonal.

Finally, we point out that the asymptotic spectral properties of the
matrices arising from this discretization ($L_D$ in our case), to a
large extent, do not depend on the choice of the finite difference
formula, but really depend on the Locally Toeplitz structure that in
turn arises from operator appearing in the PDE
(\cite{Serra06:glt}). Thus it should be possible to generalize most of
the results of the following sections on linear solvers and
preconditioning to other spatial discretizations techniques, including
finite element methods.

\subsection{Newton method} 
\label{ssec:newton}
In order to advance the numerical solution from $\vec{u^{n-1}}$ to
$\vec{u^n}$, the nonlinear system of equations \eqref{eq:CNfd} must be
solved at each timestep. We achieve this, by iterating with the
Newton's method for the function
\begin{equation}\label{eq:CNF}
  F(\vec{u}) = \vec{u} 
  -\frac12\frac{\dt}{h^2} L_{D(\vec{u})}\vec{u}
  -\frac12\frac{\dt}{h^2} L_{D(\vec{u}^{n-1})}\vec{u}^{n-1}
  - \vec{u}^{n-1} 
\end{equation}
The Jacobian of $F$ is
\begin{align} 
&F^\prime(\vec{u}) = 
X_N(\vec{u}) 
+ Y_N(\vec{u})\label{Jf2}
\\
&X_N(\vec{u}) = 
I_N-\frac12\frac{\dt}{h^2}L_{D(\vec{u})} \label{Xn2}
\\
&{Y}_N(\vec{u}) = 
-\frac12\frac{\dt}{h^2}T_N(\vec{u}) \diag^N_k(D^\prime_k)\label{Yn2}
\\
&T_N(\vec{u}) =
\tridiag^N_k[u_{k-1}-u_k,u_{k-1}-2u_k+u_{k+1},u_{k+1}-u_k]\label{Tn}
\end{align}
where $T_N$ is the same matrix of the first order case
(e.g. \eqref{Tn} in one spatial dimension).
The only difference in two dimensions is that $T_N$ is pentadiagonal.

We observe that the first order scheme based on Implicit Euler gives
rise to
\begin{equation}\label{eq:F:IE}
  \widetilde{F}(\vec{u}) = \vec{u} 
  -\frac{\dt}{h^2} L_{D(\vec{u})}\vec{u}
  - \vec{u}^{n-1} 
\end{equation}
This is the case studied in \cite{SDS:degdiff}. Since the Jacobian
matrix of $\widetilde{F}$ differs from $F^\prime$ only for the missing
$\tfrac12$ factors in $X_N$ and $Y_N$, most of the results proved in
\cite{SDS:degdiff} can be adapted to the present setting.  In the
following we will thus only sketch the proofs.

Our main result is that the Newton method defined by ${F}$,
initialised with $\vec{u}^{n,0}=\vec{u}^{n-1}$, is convergent under a
linear restriction on the timestep. In order to prove it, we need the
following estimate for the norm of the inverse of
$J$.
\begin{proposition}\label{prop:Fprimo}
  Consider $F(\vec{u})$ as defined in \eqref{eq:CNF}, where
  $\vec{u}$ is a sampling (at a given time $t$) of a solution $u$ of
  \eqref{eq:pm} with $D$ differentiable and having first derivative
  Lipschitz continuous.  If, in addition, $\vec{u}$ is differentiable
  with Lipschitz continuous first derivative, we have that
  \begin{equation}\label{eq:Finvnorminf}
    \left\|
    F^\prime(\vec{u})^{-1}\right\|_\infty \leq C_1
  \end{equation}
  for $h$ sufficiently small and under the additional assumption that
  $\dt\le C_\infty h$ for some $C_\infty >0$ that does not depend on $h$. 
\end{proposition}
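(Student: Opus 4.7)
The plan is to factor the Jacobian as $F'(\vec{u}) = X_N(\vec{u})\bigl(I_N + X_N(\vec{u})^{-1} Y_N(\vec{u})\bigr)$ and bound the two factors separately, adapting the argument used for Implicit Euler in \cite{SDS:degdiff} and just tracking the extra $\tfrac12$ factors in $X_N$ and $Y_N$. The two halves play distinct roles: $X_N$ carries the Laplacian-type structure and will admit a uniform inverse bound from M-matrix considerations, while $Y_N$ is a perturbation whose smallness forces exactly the mild CFL-type restriction $\dt \le C_\infty h$.

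First I would prove $\|X_N(\vec{u})^{-1}\|_\infty \le 1$. Since $D\ge 0$, the matrix $-L_{D(\vec{u})}$ has non-positive off-diagonal entries and non-negative diagonal entries, with interior row sums equal to zero (and boundary row sums strictly positive once the Dirichlet data are moved to the right-hand side). Consequently $X_N = I_N + \tfrac{\dt}{2h^2}\bigl(-L_{D(\vec{u})}\bigr)$ is a nonsingular M-matrix satisfying $X_N\vec{1}\ge \vec{1}$ componentwise, which gives $X_N^{-1}\ge 0$ entrywise and $\|X_N^{-1}\vec{1}\|_\infty\le 1$, i.e.\ $\|X_N^{-1}\|_\infty \le 1$.

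Next I would estimate $\|Y_N(\vec{u})\|_\infty$. The rows of $T_N(\vec{u})$ contain the first and second differences of $\vec{u}$; the Lipschitz continuity of $u'$ gives $|u_{k\pm 1}-u_k|\le \|u'\|_\infty h$ and $|u_{k-1}-2u_k+u_{k+1}|=O(h^2)$, hence $\|T_N(\vec{u})\|_\infty \le 2\|u'\|_\infty h + O(h^2)$. Together with $\|\diag^N_k(D'_k)\|_\infty\le \|D'\|_\infty$ and the prefactor $\tfrac{\dt}{2h^2}$ in \eqref{Yn2}, this yields
\[
\|Y_N(\vec{u})\|_\infty \le \|D'\|_\infty\|u'\|_\infty\,\frac{\dt}{h} + O(\dt),
\]
which under $\dt\le C_\infty h$ is bounded by $C_\infty\|D'\|_\infty\|u'\|_\infty + o(1)$ as $h\to 0$. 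Choosing $C_\infty$ small enough that this quantity is at most some $q<1$ for all sufficiently small $h$, a Neumann series gives $\|(I_N + X_N^{-1}Y_N)^{-1}\|_\infty\le 1/(1-q)$, and combining with the first step yields $\|F'(\vec{u})^{-1}\|_\infty\le 1/(1-q)=:C_1$.

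The main obstacle is the second step: one must check that the $O(h)$ bound on $\|T_N\|_\infty$—which is what the $C^{1,1}$ regularity of $u$ buys—is exactly strong enough to absorb the $\dt/h^2$ prefactor into a bounded quantity, and this is what dictates the linear (rather than quadratic) CFL scaling. Once that is in place, the M-matrix estimate $\|X_N^{-1}\|_\infty\le 1$ is what makes the Neumann-series argument go through without any additional hypothesis on $D$ beyond the stated differentiability and Lipschitz $D'$.
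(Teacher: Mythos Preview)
Your proposal is correct and is precisely the argument the paper defers to in \cite{SDS:degdiff}: the paper's own proof simply observes that $F'$ differs from the Implicit Euler Jacobian $\widetilde{F}'$ only by harmless $\tfrac12$ factors in front of the $D_{k\pm1/2}$ terms, and your factorization $F'=X_N(I_N+X_N^{-1}Y_N)$ with the M-matrix bound $\|X_N^{-1}\|_\infty\le 1$ and the $O(\dt/h)$ estimate on $Y_N$ is exactly that argument spelled out, with the $\tfrac12$'s visibly not affecting either step. Your reading of the CFL condition as existential (choose $C_\infty$ small enough to make $\|X_N^{-1}Y_N\|_\infty<1$) matches the way the proposition is used downstream in Theorem~\ref{teor:newton}.
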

\begin{proof}
  $F^\prime(\vec{u})$ differs from $\widetilde{F}^\prime(\vec{u})$ for
  $\widetilde{F}$ defined in \eqref{eq:F:IE} only by factors
  $\tfrac12$ appearing before any $D_{k+1/2}$ term. Since these terms
  are discarded in the estimates for the proof of the analogous result
  for $F$, the same proof is valid here. See \cite{SDS:degdiff} for
  the details.
\end{proof}

The following result is a classical tool (see \cite{Ortega:book}) for
handling the global convergence of the Newton procedure.
\begin{theorem}[Kantorovich]\label{th:kan}
  Consider the Newton method for approximating the zero of a vector
  function $F(\vec{u})$, starting from the initial approximation
  $\vec{u}^{(0)}$. Under the assumptions that
  \begin{subequations}\label{eq:hp}
    \begin{align}
      &\|\left[F^{\prime}(\vec{u}^{(0)})\right]^{-1}\| \leq\beta\, ,
      \label{eq:hp:beta}\\
      &\|\left[F^{\prime}(\vec{u}^{(0)})\right]^{-1}F(\vec{u}^{(0)})\|
      \leq\eta\, ,
      \label{eq:hp:eta}\\
      &\|F^{\prime}(\vec{u})-F^{\prime}(\vec{v})\|\leq\gamma\|\vec{u}-\vec{v}\|\,
      , \label{eq:hp:gamma}
    \end{align}
  \end{subequations}
  and that
  \begin{equation}\label{eq:12}
    \beta\eta\gamma < \frac12\, ,
  \end{equation}
  the method is convergent and, in addition, the stationary point of
  the iterations lies in the ball with centre $\vec{u}^{(0)}$ and
  radius
  \[\frac{1-\sqrt{1-2\beta\eta\gamma}}{\beta\gamma}.\]
\end{theorem}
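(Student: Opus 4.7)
The plan is to follow the classical majorant argument due to Kantorovich. First I would introduce the scalar ``majorizing'' quadratic
\[
\phi(t) = \frac{\gamma}{2}\,t^2 - \frac{t}{\beta} + \frac{\eta}{\beta},
\]
whose Newton iterates $t_{k+1} = t_k - \phi(t_k)/\phi'(t_k)$, started from $t_0 = 0$, will serve as scalar upper bounds for the iterates $\vec{u}^{(k)}$ generated by the method. Under the hypothesis $\beta\eta\gamma < 1/2$, the polynomial $\phi$ has two distinct real zeros $t^* < t^{**}$, with the smaller one equal to the formula $(1-\sqrt{1-2\beta\eta\gamma})/(\beta\gamma)$ that appears in the statement; moreover the scalar Newton sequence $\{t_k\}$ is monotone increasing and converges to $t^*$ from below.

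The heart of the proof is then the inductive estimate
\[
\|\vec{u}^{(k+1)}-\vec{u}^{(k)}\| \le t_{k+1}-t_k,
\]
coupled with invertibility and a norm bound for $F'(\vec{u}^{(k)})$ at every $k$. To carry the induction from $k$ to $k+1$, I would use the integral remainder
\[
F(\vec{u}^{(k+1)}) = \int_0^1 \bigl[F'(\vec{u}^{(k)} + s(\vec{u}^{(k+1)}-\vec{u}^{(k)})) - F'(\vec{u}^{(k)})\bigr](\vec{u}^{(k+1)}-\vec{u}^{(k)})\,ds,
\]
which combined with \eqref{eq:hp:gamma} yields $\|F(\vec{u}^{(k+1)})\|\le \tfrac{\gamma}{2}\|\vec{u}^{(k+1)}-\vec{u}^{(k)}\|^2$. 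To control $[F'(\vec{u}^{(k+1)})]^{-1}$ I would invoke a Banach lemma: since $\|F'(\vec{u}^{(k+1)})-F'(\vec{u}^{(0)})\| \le \gamma t_{k+1}$ and $t_{k+1} < t^* < 1/(\beta\gamma)$, the perturbation is small enough to guarantee invertibility, and the resulting norm bound mirrors the scalar quantity $-1/\phi'(t_{k+1})$. Composing the two estimates produces exactly $\|\vec{u}^{(k+2)}-\vec{u}^{(k+1)}\| \le t_{k+2}-t_{k+1}$, closing the induction.

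Once the majoration is in hand, telescoping $\sum_k (t_{k+1}-t_k) = t^*$ shows $\{\vec{u}^{(k)}\}$ is Cauchy, hence convergent to some $\vec{u}^\infty$ with $\|\vec{u}^\infty - \vec{u}^{(0)}\|\le t^*$, which is exactly the radius claimed. Continuity of $F'$ and the bound $\|F(\vec{u}^{(k)})\|\to 0$ then force $F(\vec{u}^\infty)=0$. The main obstacle I expect is the coupled nature of the induction: the contraction bound on $\|\vec{u}^{(k+1)}-\vec{u}^{(k)}\|$ requires $[F'(\vec{u}^{(k)})]^{-1}$ to exist and be controlled, while that inverse in turn requires the iterates to stay inside the ball of radius $t^*$ around $\vec{u}^{(0)}$; both facts must be propagated simultaneously. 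Because the result is entirely classical, in the paper I would (as the authors do) simply quote the statement from Ortega's textbook rather than reproduce the full majorant proof.
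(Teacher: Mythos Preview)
Your proposal is correct: you outline the standard majorant argument for Kantorovich's theorem and, crucially, you recognise that the paper does not prove the theorem at all but simply quotes it as a classical tool from Ortega's textbook. Since the paper's ``proof'' is just a citation, and you explicitly propose doing the same, your approach coincides with the paper's.
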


\begin{theorem}\label{teor:newton}
  The Newton method for $\widetilde{F}(\vec{u})$ defined in
  \eqref{eq:CNF} for computing $\vec{u}^{n}$ is convergent when
  initialised with the solution at the previous timestep (i.e.
  $\vec{u}^{n,0}=\vec{u}^{n-1}$) and for $\dt\leq C h$, for a positive
  constant $C$ independent of $h$.
\end{theorem}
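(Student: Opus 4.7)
My plan is to apply the Kantorovich theorem (Theorem \ref{th:kan}) with starting point $\vec{u}^{(0)} = \vec{u}^{n-1}$ and to track the dependence of $\beta$, $\eta$, $\gamma$ on the discretization parameters $\dt$ and $h$, so that the key inequality $\beta\eta\gamma<\tfrac12$ reduces to a condition of the form $\dt\le Ch$. For \eqref{eq:hp:beta}, I would simply invoke Proposition \ref{prop:Fprimo}, which gives $\beta = C_1$ independent of both $h$ and $\dt$ (provided $\dt\le C_\infty h$ is already enforced).

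For \eqref{eq:hp:eta}, the useful observation is the cancellation that occurs when \eqref{eq:CNF} is evaluated at $\vec{u}=\vec{u}^{n-1}$: the $\vec{u}^{n-1}$ terms cancel and the two $L_{D(\vec{u}^{n-1})}\vec{u}^{n-1}$ contributions add, yielding
\[
F(\vec{u}^{n-1}) \;=\; -\frac{\dt}{h^2}\,L_{D(\vec{u}^{n-1})}\vec{u}^{n-1}.
\]
Because the finite difference formula \eqref{eq:FDlaplacian} is consistent and $\vec{u}^{n-1}$ is the sampling of a regular solution, the vector $\tfrac{1}{h^2}L_{D(\vec{u}^{n-1})}\vec{u}^{n-1}$ is a bounded approximation of $\nabla\cdot(D(u)\nabla u)$. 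Combined with the bound from Proposition \ref{prop:Fprimo}, this gives $\eta = O(\dt)$.

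For \eqref{eq:hp:gamma}, I would estimate the Lipschitz constants of $X_N(\vec{u})$ and $Y_N(\vec{u})$ entrywise. In $X_N$, the coefficients $D_{k\pm 1/2}$ vary by $O(\|\vec{u}-\vec{v}\|_\infty)$ thanks to the Lipschitz continuity of $D$, each multiplied by the prefactor $\dt/(2h^2)$; for $Y_N$, the matrix $T_N(\vec{u})$ depends linearly on $\vec{u}$ and hence is globally Lipschitz, while the diagonal factor $\diag^N_k(D'_k)$ is Lipschitz by the regularity hypothesis on $D'$. Summing these contributions and using that every row of $X_N+Y_N$ has a bounded number of nonzero entries, one obtains $\gamma = O(\dt/h^2)$ in the induced $\ell^\infty$ operator norm, exactly as in \cite{SDS:degdiff} modulo the $\tfrac12$ factors arising from Crank--Nicholson.

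Combining the three bounds gives $\beta\eta\gamma = O(\dt^2/h^2)$, so the Kantorovich condition \eqref{eq:12} is satisfied whenever $\dt\le Ch$ with $C$ sufficiently small, and convergence follows. The main technical obstacle, in my view, is maintaining uniform control of the $h$-scaling when passing from entrywise estimates of $X_N$ and $Y_N$ to the induced operator norm while simultaneously exploiting the smoothness of $\vec{u}^{n-1}$ in the consistency argument for $\eta$: these are precisely the places where the underlying ``Locally Toeplitz'' structure of the discretization and the regularity of the exact solution must interact correctly, and they are the reason the step restriction $\dt\le Ch$ (rather than a purely parabolic restriction, or none at all) enters the statement.
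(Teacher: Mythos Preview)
Your proposal is correct and follows essentially the same route as the paper: apply Kantorovich (Theorem~\ref{th:kan}), obtain $\beta\le C_1$ from Proposition~\ref{prop:Fprimo}, use the cancellation $F(\vec{u}^{n-1})=-\tfrac{\dt}{h^2}L_{D(\vec{u}^{n-1})}\vec{u}^{n-1}$ together with consistency of the discrete Laplacian to get $\eta=O(\dt)$, and bound $\gamma=O(\dt/h^2)$ by entrywise Lipschitz estimates on $X_N$ and $Y_N$, yielding $\beta\eta\gamma=O(\dt^2/h^2)$ and hence the restriction $\dt\le Ch$. Your treatment of the $\eta$ bound is in fact slightly more explicit than the paper's sketch, which writes $\dt\|L_{D(\vec{u}^{n-1})}\vec{u}^{n-1}\|_\infty$ while implicitly absorbing the $1/h^2$ factor into the consistency argument.
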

\begin{proof}
  The proof of the same statement for $F$ as given in \cite{SDS:monum}
  can be easily adapted to the present case. The technique is to first
  establish estimates \eqref{eq:hp} as follows:
  \begin{itemize}
    \item $\beta\leq C_1$ if $\dt\leq C_{\infty}h$ by Proposition
      \ref{prop:Fprimo}
    \item $\eta\leq\beta C_2\dt$ by first applying  Proposition
      \ref{prop:Fprimo} again and then estimating
      \[ \left\|\widetilde{F}(\vec{u}^{n,0})\right\|_\infty=
      \left\| \widetilde{F}(\vec{u}^{n-1})\right\|_\infty=
      \dt \left\|L_{D(\vec{u}^{n-1})}\vec{u}^{n-1}\right\|_\infty=
      O(\dt)
      \]
    \item $\gamma\leq8\|D^\prime\|_\infty\tfrac{\dt}{h^2}$ by direct
      computation as in \cite{SDS:degdiff}.
    \end{itemize}
    This implies that condition \eqref{eq:12} can be satisfied when
    choosing $\dt\leq Ch$ for a sufficiently small positive constant
    $C$ that is independent of $h$.
  \end{proof}
  
  \begin{remark} Setting the initial guess with the average between
    $\vec{u}^{n-1}$ and the value given by an Explicit Euler step, like
      \[
      \vec{u}^{n,0} = \vec{u}^{n-1} +\frac12 \frac{\dt}{h^2}
      L_{D(\vec{u}^{n-1})}\vec{u}^{n-1}
      \]
      does not change the convergence ratio, but in practice one needs
      less iterations to reach a given tollerance.
  \end{remark}

\subsection{Iterative methods for the linear system}
\label{ssec:alglin}
Of course the Jacobian matrix $F^\prime(\vec{u}^{n,(s)})$ is not
explicitly inverted at each Newton step, but instead we compute the
$(s+1)^{\text{th}}$ Newton iterate by first solving the linear system
\[ F^\prime(\vec{u}^{n,(s)}) \vec{v}^{(s)}
= F(\vec{u}^{n,(s)}) \]
for $\vec{v}^{(s)}$ and then setting
\[ \vec{u}^{n,(s+1)} = \vec{u}^{n,(s)} + \vec{v}^{(s)}\]

The matrix $A_N=F^\prime(\vec{u}^{n,(s)})$ is a square tridiagonal
(respectively pentadiagonal) $N\times{N}$ (respectively
$N^2\times{N^2}$) matrix when the domain is one (respectively two)
dimensional. Its spectral properties are crucial in choosing an
appropriate solver for the linear system.  Given the large dimension
of the system, we aim at an iterative method with an optimal
preconditioner, so that we can compute $\vec{v}^{(s)}$, on average, in
a finite number of iterations.

Moreover $A_N$ differs from $\widetilde{F}^\prime$ only by the factors
$\tfrac12$ that were missing in the matrices studied in
\cite{SDS:degdiff}. It can thus be shown that $A_N$ is not symmetric,
but it is dominated by its symmetric part $(A_N+A_N^\tr)/2$, which is
in turn essentially a weighted laplacian.  Rather detailed information
on the spectrum of $A$ can be gained via the theory of Locally
Toeplitz Sequences of \cite{Tilli98}. In particular, when $\dt$ is
chosen proportional to $h$, the sequence of $N\times N$ matrices
$\{hA_N\}$ obtained for increasing number of grid points is Locally
Toeplitz (in the sense of \cite{Tilli98}) with respect to the pair of
functions $\left(D(u(x)),2-2\cos(s)\right)$ defined on
$[a,b]\times[0,2\pi]$. Hence (see \cite{SDS:degdiff}) we can expect
the GMRES method, which is picked due to the asymmetry of $A_N$ as the
main iterative solver, to converge in $O(\sqrt{N})$ iterations.

In order to study a preconditioning strategy, first observe that the
sequence $\{X_N\}$ of the symmetric parts of $A_N$ is also Locally
Toeplitz with respect to the same generating functions. Next recall
that any Locally Toeplitz sequence is also Generalized Locally
Toeplitz, a class which is an closed under inversion, defined in
\cite{Serra06:glt}.  Hence both sequences are also Generalized Locally
Toeplitz sequences and $\{X_N^{-1}A_N\}$ is Generalized Locally
Toeplitz with generating function $1$ and thus the singular values are
weakly clustered at the point $1$. This is enough to guarantee the
superlinear convergence of the preconditioned GMRES methods, but in
\cite{SDS:degdiff} we also show that the clustering is strong, proving
that $X_N$ is an optimal preconditioner for solving a linear system
with matrix $\{A_N\}$ with GMRES, i.e. a given error reduction is
reached within a number of iterations which is independent on the
problem size $N$.

Unfortunately there is not a fast direct solver for $X_N$, so we
resort to a Multigrid Method (MGM) with a Galerkin approach.  The MGM
\cite{Trottenberg:book} consist in constructing a solution of a linear
system by composing the action of simple iterative schemes (like
Jacobi or Gauss-Seidel), that are run on the original system and on
smaller systems derived from the first one and called {\em coarse grid
  approximations}.

\newcommand{\pro}{P}
\newcommand{\mc}{\mathcal}
\newcommand{\C}{\mathbb{C}}
\newcommand{\R}{\mathbb{R}}
\newcommand{\smo}{S}

More precisely, in order to solve a linear system $X\vec{u}=\vec{b}$
in $\R^m$, one considers a finite sequence of integers
$m_0=m>m_1>m_2>\dots>m_\ell>0$ and full-rank matrices
$\pro^{(i)}_{(i+1)}\in\R^{m_{i+1}\times m_i}$ (called {\em
  projections}) and defines the V-cycle method as
\[ 
\vec{u^{k+1}} = MGM(0,\vec{u}^k,\vec{b})
\] 
with MGM defined recursively as follows:
\begin{equation*}\label{MGMA}
  \begin{array}{c}
    \vec{u}_i^{({\rm out})}:=MGM(i,\vec{u}^{({\rm in})}_i,\vec{b}_i) \\
    \hline                                                        \\[-2.5mm]
    \begin{array}{l@{}ll|l@{}l}
      \text{If (}i=l\text{) } &
      \multicolumn{4}{@{}l}{%
        \text{Then Solve(}A_\ell\vec{u}^{({\rm  out})}_\ell=\vec{b}_\ell\text{)}}
      \\
      &\text{Else} 
      & \bf{1} & \widetilde{\vec{u}}_i   &:= \smo^\nu_i\big(\vec{u}^{({\rm in})}_i\big)\\
      && \bf{2} & \vec{r}_i &:= A_i\widetilde{\vec{u}}_i-\vec{b}_i\\
      && \bf{3} & \vec{b}_{i+1} &:= \pro^{(i)}_{(i+1)} \vec{r}_i   \\
      && \bf{4} & A_{(i+1)} &:= \pro^{(i)}_{(i+1)} A_{(i)} {(\pro^{(i)}_{(i+1)})}^\tr \\
      && \bf{5} & \vec{y}_{i+1} &:= MGM(i+1,\vec{0}_{n_{i+1}},\vec{b}_{i+1})\\
      && \bf{6} & \vec{u}^{({\rm out})}_i   &:= \widetilde{\vec{u}}_i-{(\pro^{\,i}_{i+1})}^\tr\vec{y}_{i+1} \\
    \end{array}
  \end{array}
\end{equation*}
Step 1 performs some ($\nu$) iterations of an an iterative method
(called {\em pre-smoother}) for $n_i$-dimensional linear systems that we
denoted generically as $\mc{\smo}_{i}$, chosen for its error
dampening properties, which is often taken in the Jacobi or the
Gauss-Seidel family. Then, step 2 calculates the residual of the proposed
solution and steps 3--6 define the {\em recursive coarse grid
  correction}, by projection (step 3) of the residual, sub-grid
correction (steps 4,5), and interpolation (step 6). Note that only the
smallest system (of level $\ell$) is solved exactly, while all the
others are recursively managed by reduction to low-level system and
smoothing. For more details and generalisations, see
e.g. \cite{Trottenberg:book}. 

For differential problems it is natural to construct the coarse grid
approximations with the Galerkin approach, i.e. by considering a
sequence of coarser and coarser grids with interpolation operators
$P_{(l)}^{(l+1)}$ reconstructing values of the unknown function on the
grid of level $l$ from the smaller set values on the coarser grid of
level $l+1$. In this paper it is sufficient to consider linear
(bilinear in two dimensions) interpolation operators.  This is known
to give rise to an optimal solver for a weighted laplacian. Moreover in
\cite{SDS:degdiff} we also observed that it is not necessary to bring
the MGM to convergence, but applying a single V-cycle to the GMRES
residual is enough to precondition optimally the GMRES method.

\subsection{Numerical tests}
We report here some numerical tests supporting the results of the
previous sections. For $N$
ranging from $32$ to $1024$, we integrate numerically \eqref{eq:pm}
with the Barenblatt initial data for $m=4$ from $t=0$ to $t=20/32$ in
one and two spatial dimensions, recording the number of Newton
iterations, GMRES iterations and the error against the exact solution.

\begin{figure}
  \begin{tabular}{cc}
    \includegraphics[width=0.45\linewidth]{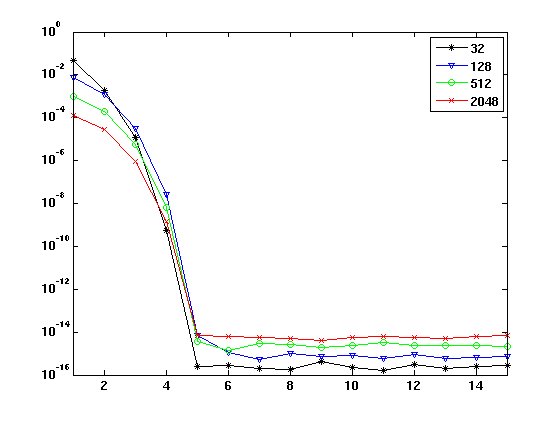}&
    \includegraphics[width=0.45\linewidth]{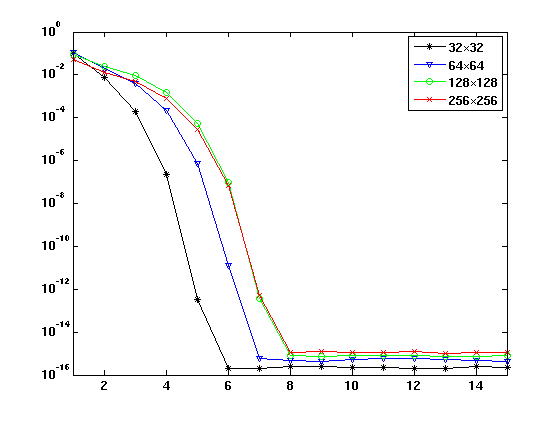}\\
    {\bf (a)} & {\bf (b)}
  \end{tabular}
  \caption{Crank-Nicholson scheme: Newton convergence history in one (a)
    and two (b) spatial dimensions}
  \label{fig:CNnewton}  
\end{figure}

In figure \ref{fig:CNnewton} we plot the quantity
$\|\vec{u}^{1,s}-\vec{u}^{1,s-1}\|$ during the Newton iterations
$s=1,2,\ldots,30$ for computing the first time step. Different symbols
and line colours correspond to different mesh sizes on the interval
(\ref{fig:CNnewton}a) and on a square domain (\ref{fig:CNnewton}b).
It is clear that the very good tolerance $10^{-6}$ is reached within
a reasonable number of iterations: $4$ in one spatial dimension and
$6$ in $\mathbb{R}^2$. This good convergence history is to a large
extent not dependent on the chosen value for $m$ (see also
\cite{SDS:degdiff}).

\begin{figure}
  \centering\includegraphics[width=0.45\linewidth]{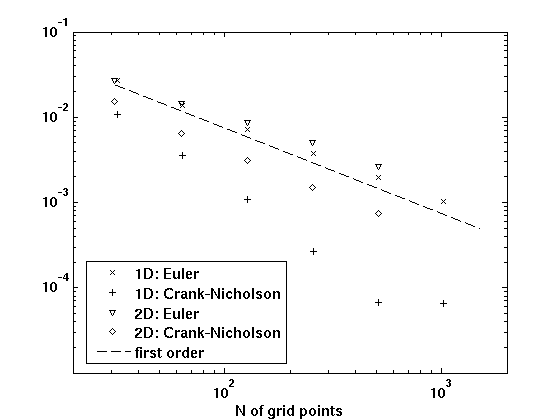}
  \caption{Error of the numerical scheme in one  and two
    dimensions, comparing with Implicit Euler}
  \label{fig:CNerr}  
\end{figure}

Using the Crank-Nicholson scheme, we observe a reduction of the error
with respect to the first order Euler scheme (see Figure
\ref{fig:CNerr}), even if the scheme does not converge with the
expected order $2$. This is due to the presence of singularities in
the exact solution. The least square fit of the errors obtained, in
one dimension, with the Crank-Nicholson scheme and represented with
plus signa in the figure gives that the error decays proportionally to
$N^{-1.5}$. In two dimensions the rate of convergence is closer to
$1$, but the errors are nevertheless lower than those obtained with
the Implicit Euler scheme.

\begin{figure}
  \begin{tabular}{cc}
    \includegraphics[width=0.45\linewidth]{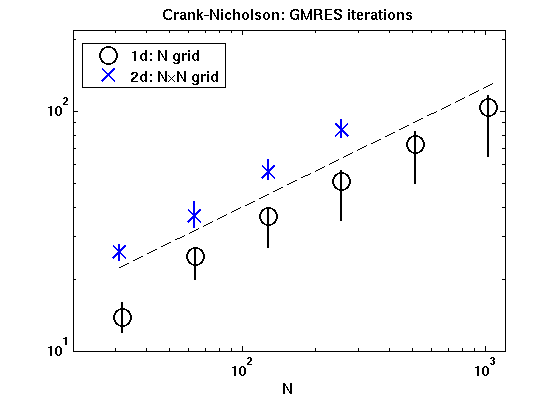}&
    \includegraphics[width=0.45\linewidth]{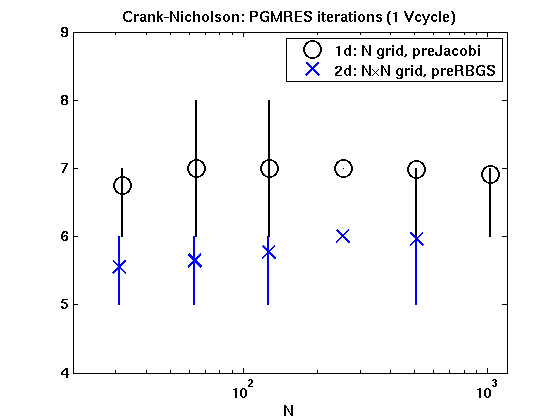}\\
    {\bf (a)} & {\bf (b)}
  \end{tabular}
  \caption{Crank-Nicholson scheme: GMRES iterations inside the Newton
    steps. (a): no preconditioning (the dashed line is the $N^{1/2}$
    slope). (b): 1 V-cycle as preconditioner. }
  \label{fig:CNPGMRES}  
\end{figure}

Finally, in Figure \ref{fig:CNPGMRES} we plot the number of iterations
of the methods for linear system.  For each value of $N$, we plot the
average number of GMRES iterations performed by the algorithm
(symbols), while the vertical lines span from the minimum to the
maximum value recorded during the integration.  Panel
\ref{fig:CNPGMRES}a compares the number of unpreconditioned GMRES
iterations on different one- and two-dimensional grids with the
$\sqrt{N}$ slope. Note that for the largest values of $N$, the
two-dimensional experiments were not possible due to memory
limitations on a PC with 8Gb of RAM. We employ MGM as preconditioner,
choosing one damped Jacobi iteration as a presmoother in the V-cycle
in the one-dimensional experiments, while for the two-dimensional we
chose the more efficient Red-Black Gauss-Seidel.  Panel
\ref{fig:CNPGMRES}b shows that applying one MGM V-cycle is optimal
(constancy of iterations number for different $N$), robust (small
variance in iterations number) and memory efficient (small number of
iterations require small amount of storage memory in GMRES).


\section{Marble sulfation}
\label{sec:sulfation} 

In this section we consider the model for the marble sulfation
problem described in \cite{ADN:sulfation}. We describe the main
features of the model only briefly, referring the reader to the
original paper for the details and more comprehensive study of the
properties of the solutions. \cite{ADN:sulfation} consider the
(simplified) chemical reaction
\begin{equation*}\label{eq:REAC}
\mathrm{CaCO_3} + \mathrm{SO_2} +\frac12\mathrm{O_2} +2\mathrm{H_2O}
\longrightarrow \mathrm{CaSO_4}\cdot2\mathrm{H_2O} + \mathrm{CO_2}.
\end{equation*}
to account for the transformation of $\mathrm{CaCO_3}$ of the marble
stone into gypsum $\mathrm{CaSO_4}\cdot2\mathrm{H_2O}$, that is
triggered in a moist atmosphere by the availability of $\mathrm{SO_2}$
at the marble surface and inside the pores of the stone.  The two main
variables of the model are $c(t,x)$ denoting the local concentration
of calcium carbonate and $s(t,x)$ the local concentration of
$\mathrm{SO_2}$. As the reaction proceeds, the calcium carbonate
concentration is reduced from the initial value $c_0$, as
$\mathrm{CaCO_3}$ is progressively replaced by gypsum. Denoting
$\varphi_0$ and $\varphi_g$ the porosity of the pristine marble and of
the gypsum, the model assumes that the porosity of the intermediate
state is well approximated by linear interpolation
\begin{equation*}\label{eq:porosity}
\varphi(c) = \varphi_g + (\varphi_0-\varphi_g)\frac{c}{c_0} =\alpha
c + \beta.
\end{equation*}
The constants $\alpha$ and $\beta$ depend on the porosity of the
material involved. The model considered in \cite{ADN:sulfation} is
described by the following system of PDEs:
\begin{equation}\label{eq:natalini}
\begin{cases}
\displaystyle
\pder{\varphi(c)s}{t} &= -\frac{a}{m_c}\varphi(c)sc +
  d\nabla\cdot\left(\varphi(c)\nabla s\right),\\[2mm]
\displaystyle
\pder{c}{t} &=  -\frac{a}{m_s}\varphi(c)sc.
\end{cases}
\end{equation}

The spatial domain $x\in\Omega$ in which \eqref{eq:natalini} is set
represents a piece of marble stone for which at least a
portion of the boundary $\partial\Omega$ is in contact with the
polluted atmosphere. In particular $\partial\Omega$ is in general
split into two parts: one represents the outer surface of
the marble sample, in contact with the air, and the complementary
part that separates the portion of the marble object of the
simulation and the rest of the monument. 

\begin{figure}
  \tikz{
    \pgfdeclarepatternformonly{gas}
    {\pgfpointorigin}
    {\pgfpoint{.25cm}{.25cm}}
    {\pgfpoint{.25cm}{.25cm}}
    {
      \draw (.035,.03) circle (.1pt);
      \draw (.1,.17) circle (.1pt);
      \draw (.15,.04) circle (.25pt);
    }
  }%
  \begin{center}
    \begin{tabular}{cc}
      \begin{tikzpicture}[scale=2]
        \fill[pattern=gas,pattern color=gray] (0,0) rectangle (-1,2);
        \fill[pattern=bricks,pattern color=black!20!white] (0,0) rectangle (2,2);
        \draw[yshift=1cm,->] (-.5,0) -- (1.5,0) node[pos=0.5,above]{$\mathbf{\Omega}$} node[below]{$x$};
        \draw[yshift=1cm,line width=2pt] (0,3pt) -- (0,-3pt) node[below]{$0$};
        \draw[yshift=1cm,line width=2pt, dotted] (1,3pt) -- (1,-3pt) node[below]{$1$};
      \end{tikzpicture}
      &
      \begin{tikzpicture}[scale=2]
        \fill[pattern=gas,pattern color=gray] (-.5,1.5) rectangle (0,-.5) rectangle (1.5,0);
        
        \fill[pattern=bricks,pattern color=black!20!white] (0,0) rectangle (1.5,1.5);
        \node at (.5,.5) {$\mathbf{\Omega}$};
        \draw[line width=2pt] (0,1) -- (0,0) -- (1,0);
        \draw[dotted,line width=2pt] (1,0) -- (1,1) -- (0,1);
      \end{tikzpicture}
      \\
      {\bf(a)}&{\bf(b)}
    \end{tabular}
  \end{center}
  \caption{Sample domains $\Omega$ for problem \eqref{eq:natalini} are
    shown for the 1D setting (a) and 2D setting (b). The ``brick
    pattern'' area represents the marble stone, while the dotted area
    is air.  The boundary is drawn with a solid line where Dirichlet
    boundary conditions are applied and with a dotted line where
    free-flow boundary conditions are imposed.}
  \label{fig:domain}
\end{figure}
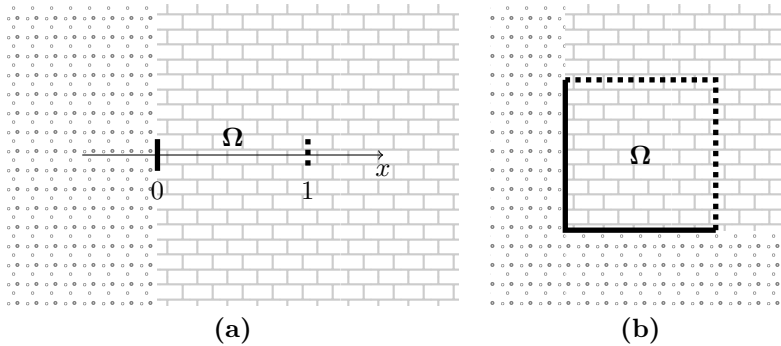

Examples of one and two-dimensional such domains are shown in Figure
\ref{fig:domain}. In order to study the formation of a gypsum crust on
a flat area of a monument, it is sufficient to take a 1-dimensional
domain like in the left panel of the figure. In order to study a
corner-like feature of a work of art, like the edge of a monument or a
long decoration in relief, one would employ a 2-dimensional domain as
the one in the right panel of the figure. Obviously more complex
shapes need 3-dimensional domains representing faithfully the volume
occupied by the marble.

Boundary conditions are set by imposing the value of $s$ on the outer
boundary and by imposing free-flow conditions for $s$ on the inner
boundary. In particular, as an example of a one-dimensional setting,
we take $x\in\Omega=[0,1]$ where $x=0$ corresponds to the outer
boundary of the marble stone, in contact with the polluted air, and
$x=1$ the inner side. The boundary conditions are illustrated in
Figure \ref{fig:domain}: they are of Dirichlet type, imposing $s(0,t)$
on the outer boundary and of free-flow type $\pder{s}{x}(1,t)=0$ on
the inner side.

The parameters $m_s$ and $m_c$ are fixed by the physical properties
of the species involved in the reaction and make sure that the mass
balance is fulfilled. On the other hand $a$ represents the reaction
rate and it depends (among other things) on the moisture of the air
and on the temperature. \cite{ADN:sulfation} describes its central
role in the analysis of the solutions of the model equations. In
particular, if $u(t,x)$ is a solution of \eqref{eq:natalini} for a
given value of $a$, then $\tilde{u}(t,x)=u(t/a^2,x/a)$ is a solution
of \eqref{eq:natalini} for $a=1$. This observation on one hand plays
a fundamental role in establishing the long-time asymptotics of the
solution and would allow to perform the simulations with $a=1$ and
then rescale the numerical solutions appropriately to take the
reaction rate into
account. However, because of its role as
fundamental physical parameter of the model, here we prefer to keep
$a$ explicitly into the equations and perform simulations seeking
numerical solution of the model in the form \eqref{eq:natalini}.
Moreover this is beneficial in view of the more complete model
including a non-constant $a$ has been described in \cite{AFNT07:model}.


We consider, as in \cite{ADN:sulfation} $\alpha=0.01$, $\beta=0.1$,
$d=1$, $m_s=64.06$, $m_c=100.09$ and use $h=1/64$ or $h=1/128$ while
varying $a$ from $1$ to $10^5$.

\subsection{Discretization}
\label{ssec:model:discr}
As in the scalar case, we consider first and second order implicit
time discretizations, namely the Implicit Euler scheme
(Crandall-Liggett formula) and the Crank-Nicholson scheme.  The
general setup for the scheme is the same as in the scalar case: we
discretize the spatial domain and the elliptic differential operator
with finite differences, write the time-advancement problem as an
implicit equation and set up a Newton scheme to solve it. This
procedure is advantageous if the Newton scheme converges in a
reasonable number of iterations and one can devise an optimal
preconditioner for the linear system that has to be solved at each
Newton step.

For the space discretization, we
denote $x_{\xi}=0+\xi h\in\Omega$. Approximating the elliptic operator
along the same lines as in \eqref{eq:FDlaplacian}, we consider the
second order finite difference formula
\begin{eqnarray}\label{}
\left.\partial_x(\varphi(c) \partial_x s)\right|_{x_j}
 & = & \frac{\varphi(c(x_{j+1/2}))(s(x_{j+1})-s(x_j))}{h^2} -\\
  & & -\frac{\varphi(c(x_{j-1/2}))(s(x_{j})-s(x_{j-1}))}{h^2}.
\end{eqnarray}
This in turn suggests that we employ two staggered grids in the domain
$\Omega$: the grid $x_j$ ($j\in\mathbb{N}$) with the unknowns $s^n_j$
for $s(t^n,x_j)$ and the grid $x_{j+1/2}$ ($j\in\mathbb{N}$) with the
unknowns $c^n_{j+1/2}$ for $c(t^n,x_{j+1/2})$. For short, we also
denote $\varphi^n_{j+1/2}=\varphi(c^n_{j+1/2})$.  For ease of
reference, we will denote the two grid also by ``integer grid'' and
``half-integer grid''.

We write explicitly the formulas for the Crank-Nicholson time
discretization \eqref{eq:CN}, pointing out that the case of
Crandall-Liggett (Implicit Euler) scheme \eqref{eq:EI} can be
similarly dealt with. Thus we consider the scheme that computes
$s^{n}_j$ and $c^{n}_{j+1/2}$ solving the nonlinear system of
equations:
\begin{equation}
  \label{eq:NATF}
  \begin{cases}
    0=\mathbf{F}^{(s)}(\vec{s}^n,\vec{c}^n) =
    & \begin{aligned}[t]
      &\Phi^n \vec{s}^n 
      +\frac{\dt}2 \frac{a}{m_c} C^n \vec{s}^n
      +\frac{\dt}2 d L_{\varphi^n}\vec{s}^n\\ 
      &- \Phi^{n-1} \vec{s}^{n-1}
      +\frac{\dt}2 \frac{a}{m_c} C^{n-1} \vec{s}^{n-1}
      +\frac{\dt}2 d L_{\varphi^{n-1}}\vec{s}^{n-1}
    \end{aligned}
    \\
    0=\mathbf{F}^{(c)}(\vec{s}^n,\vec{c}^n) =
    & \vec{c}^n - \vec{c}^{n-1}
    + \frac{\dt}2 \frac{a}{m_s} S^n \vec{c}^n
    + \frac{\dt}2 \frac{a}{m_s} S^{n-1} \vec{c}^{n-1}
  \end{cases}
\end{equation}
where $\vec{s}^n=[s^n_1,s^n_2,\ldots]^\tr$,
$\vec{c}^n=[c^n_{1/2},c^n_{3/2},\ldots]^\tr$ and
\begin{subequations}
\begin{gather}
\Phi^n = \diag_k\left[\frac{\varphi^n_{k+1/2}+\varphi^n_{k-1/2}}{2}\right]\\
C^n = \diag_k\left[ 
  \frac{\varphi^n_{k+1/2}c^n_{k+1/2}+\varphi^n_{k-1/2}c^n_{k-1/2}}{2}
\right]\\
L_{\varphi^n} = \tridiag_k\left[-\varphi^n_{k-1/2},
  \varphi^n_{k-1/2}+\varphi^n_{k+1/2},
  -\varphi^n_{k+1/2}
\right]\\
S=\diag_k\left[
  \varphi^n_{k+1/2}\left(\frac{s^n_{k+1}+s^n_k}2\right)
\right]
\end{gather}
\end{subequations}
Note that equations \eqref{eq:NATF} are not linear, since also the
matrices $C$, $S$, $L_\phi$ and $\Phi$ depend on $\vec{c}^n$ and
$\vec{s}^n$, either directly or via the (linear) function $\varphi$. It
is important to note that the matrix $L_\varphi$ is defined as $L_D$
of \eqref{eq:Ld}, but it depends only on the half of the unknowns
of the problem: precisely $L_\varphi$ depends on 
$\vec{c}$ and it multiplies $\vec{s}$ in formula
\eqref{eq:NATF}.

The two staggered grids represent a sort of finite difference analogue
of the approximation with $P1$ (for $s(x)$) and $P0$ (for $c(x)$)
conforming finite elements considered in \cite{ADN:sulfation}. The
results obtained here on preconditioning should also be applicable
with little modifications in that case too.

Boundary conditions are imposed considering $j=1,2,\ldots,N$ in
\eqref{eq:NATF} and assuming at all time steps a given value for $s_0$
(Dirichlet boundary condition at $x=0$) and that $s_{N+1}=s_{N-1}$
(homogeneous Neumann boundary condition at $x=1$).  Thus the
expressions of $\mathbf{F}^{(s)}_1$ and $\mathbf{F}^{(s)}_N$ are
modified accordingly with respect to those in \eqref{eq:NATF},
together with the correspondent elements in the Jacobian
\eqref{eq:NATJ}. The $2N$ unknowns are collected in a vector $\vec{u}$
with the ordering
$\vec{u}=[s_1,s_2,\ldots,s_N,c_{1/2},\ldots,c_{N-1/2}]^\tr$. The
corresponding sparsity structure of the Jacobian matrix is illustrated
in Figure \ref{fig:sparsity}. For the actual implementation it is
easier to define the ``porous concentration'' and set the Dirichlet
boundary condition as
$\left.\varphi_{j-1/2}s_j\right|_{j=0}=\rho_{s_0}=1$.

Both the Crank-Nicholson and the Crandall-Liggett formulas give rise
to an unconditionally stable scheme.  Following the results previously
established, in order to solve the nonlinear problem \eqref{eq:NATF}
we set up Newton iterations. To this end we need the Jacobian matrix,
which is naturally split into four $N \times N$ block as
\begin{equation*}
J=F^\prime =
\left[\begin{array}{c|c}
J^{s}_s & J^{s}_c \\\hline J^{c}_s & J^{c}_c
\end{array}
\right]
\qquad \qquad \vec{u}=
\begin{pmatrix}
\vec{u}_s \\\vec{u}_c
\end{pmatrix}
\end{equation*}
The entries (disregarding boundary conditions) are:
\begin{equation}\label{eq:NATJ}
\begin{aligned}
  \left[J^s_s\right]_{j,k}=
  \pder{\mathbf{F}^{(s)}_j}{s_k}
  =&\frac{\varphi_{j+1/2}+\varphi_{j-1/2}}{2}\delta_{jk}
  +\frac{\dt}2 \frac{a}{m_c}
   \frac{\varphi_{j+1/2}c_{j+1/2}+\varphi_{j-1/2}c_{j-1/2}}{2}\delta_{jk}
  \\
  &+\frac{d}2\frac{\dt}{h^2} \left[ -\varphi_{j-1/2}\delta_{k,j-1}
    +(\varphi_{j-1/2}+\varphi_{j+1/2})\delta_{k,j}
    -\varphi_{j+1/2}\delta_{k,j+1}
  \right],
  \\[1mm]
  \left[J^s_c\right]_{j,k}=
  \pder{\mathbf{F}^{(s)}_j}{c_{k+1/2}}
  =&\frac{\varphi^{\prime}_{j+1/2}s_j\delta_{jk}+\varphi^{\prime}_{j-1/2}s_j\delta_{j,k+1}}{2}\\
  &+\frac{\dt}2 \frac{a}{m_c}
  \frac{(\varphi^{\prime}_{j+1/2}c_{j+1/2}+\varphi_{j+1/2})\delta_{jk}
    +(\varphi^{\prime}_{j-1/2}c_{j-1/2}+\varphi_{j-1/2})\delta_{j,k+1}
  }{2}
  \\
  &+\frac{d}2\frac{\dt}{h^2}
  \left[ \varphi^{\prime}_{j-1/2}(s_j-s_{j-1})\delta_{j,k+1}
    -\varphi^{\prime}_{j+1/2}(s_{j+1}-s_{j})\delta_{j,k}
  \right],
  \\
  \left[J^c_s\right]_{j,k}=
  \pder{\mathbf{F}^{(c)}_{j+1/2}}{s_k}
  =&\frac{\dt}2\frac{a}{m_s}
  \varphi_{j+1/2}c_{j+1/2}\frac{\delta_{j,k-1}+\delta_{jk}}{2},
  \\
  \left[J^c_c\right]_{j,k}=
  \pder{\mathbf{F}^{(c)}_{j+1/2}}{c_{k+1/2}}
  =&\left[1
    +\frac{\dt}2 \frac{a}{m_s}(\varphi^{\prime}_{j+1/2}c_{j+1/2}+\varphi_{j+1/2})
    \frac{s_{j+1}+s_j}{2}
    \right]
    \delta_{jk}.
\end{aligned}
\end{equation}

\begin{figure}
\hfil
\includegraphics[width=0.45\textwidth]{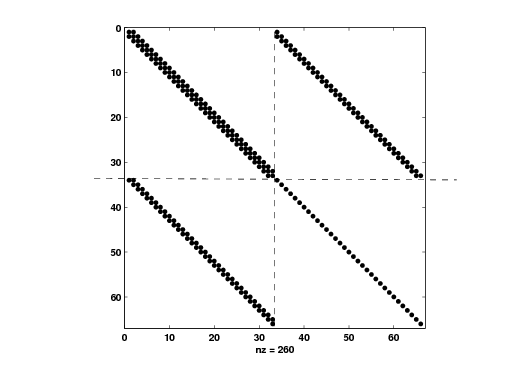}
\hfil
\caption{Sparsity structure of the Jacobian matrix \eqref{eq:NATJ}.}
\label{fig:sparsity}
\end{figure}

The sparsity structure of the Jacobian matrix with entries defined in
\eqref{eq:NATJ} is shown in Figure \ref{fig:sparsity}. A more detailed
analysis of the matrix will be carried out in the next section.

\subsection{Solving the linear system}
At each Newton iterations, we have to solve a linear system with
matrix $J$, which is not symmetric and thus we employ GMRES as the
main Krylov solver.

\begin{figure}
\centerline{\includegraphics[width=0.45\textwidth]{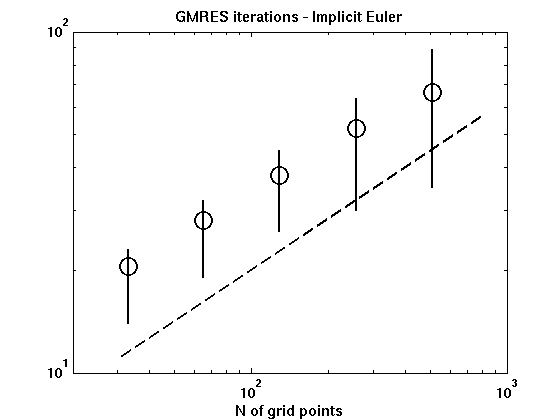}}
\caption{Number (average, min and max) of GMRES iterates per
  timestep with $A=1$, for 3 values of $N$. No preconditioner was used
  in this test. The dashed line indicates the $N^{1/2}$ slope.}
\label{fig:NAT:GMRESnoP}
\end{figure}

We observe that the top-left $J^s_s$ block is given by
\[ J^s_s = \Phi+\frac{\dt}2
\frac{a}{m_c}C+\frac12\frac{\dt}{h^2}dL_\varphi
\]
which is very similar to \eqref{Jf2}, except that the identity is
replaced by the diagonal matrix $\Phi$ with $O(1)$ entries and the
tridiagonal $Y$ term of \eqref{Jf2} is not present, corresponding
instead to the third term of the \(J^s_c\) block. The extra term of
$J^s_s$, involving the diagonal matrix $C$ has entries of order $\dt$.
Hence we expect $J_s^s$ to be spectrally not too different from
$F^\prime$ of \eqref{Jf2} and thus unpreconditioned GMRES iterations
count to grow as $\sqrt{N}$, which is indeed confirmed in Figure
\ref{fig:NAT:GMRESnoP}, where we plot the average, minimum and maximum
number of GMRES iterations needed in the case $a=1$ and for different
values of the number of grid points $N$. A least square fit gives
$N^{0.5217}$ for the number of iterations.

In order to devise a preconditioning strategy, given the structure of
$J^s_s$, we can employ a V-cycle on this block, if we can deal
optimally with the rest of the matrix. To this end we observe that the
lower left block \(J^c_s\) has nonzero entries only on two diagonals
and these decay as $O(\dt)$, while the bottom right block \(J^c_c\) is
the identity matrix plus a diagonal matrix with $O(\dt)$ entries.

\begin{theorem}\label{teo:Pnat}
  The upper triangular part of $J$,
  \begin{equation}\label{eq:Pnat}
    P= \left[\begin{array}{c|c}
        J^{s}_s & J^{s}_c \\\hline {\bf0} & J^{c}_c
       \end{array}
     \right]
  \end{equation}
  is an optimal preconditioner for $J$, assuming that the function
  $\varphi(c)$ is bounded away from $0$, or equivalently that
  $\beta>0$.
\end{theorem}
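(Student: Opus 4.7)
The plan is to show that $P^{-1}J$ differs from the identity by a matrix whose infinity norm vanishes as $h \to 0$, which forces the spectrum of $P^{-1}J$ to cluster tightly at $1$ and hence yields GMRES convergence in a number of iterations uniformly bounded in $N$ (optimality).

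First I would exploit the block-triangular structure of $P$ to invert it in closed form via the standard formula for $2\times 2$ block upper-triangular matrices:
$$P^{-1} = \begin{pmatrix} (J^s_s)^{-1} & -(J^s_s)^{-1} J^s_c (J^c_c)^{-1} \\ 0 & (J^c_c)^{-1} \end{pmatrix}.$$
Since $J - P$ has only the bottom-left block $J^c_s$ as its single nonzero entry, the product $P^{-1}(J-P)$ simplifies enormously and I would obtain the explicit perturbation
$$P^{-1}J - I = \begin{pmatrix} -(J^s_s)^{-1} J^s_c (J^c_c)^{-1} J^c_s & 0 \\ (J^c_c)^{-1} J^c_s & 0 \end{pmatrix}.$$

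Next I would bound each factor in the infinity norm. The matrix $J^c_s$ has nonzero entries of order $\dt$ on at most two diagonals, so $\|J^c_s\|_\infty = O(\dt)$; the diagonal block $J^c_c = I + O(\dt)$, hence $\|(J^c_c)^{-1}\|_\infty \leq 1 + O(\dt)$; and $J^s_c$ has entries uniformly bounded under the CFL-like restriction $\dt \leq Ch$ already inherited from Theorem \ref{teor:newton}, since the apparently dangerous third term in \eqref{eq:NATJ} contributes $\tfrac{\dt}{h^2}\varphi'(s_{j+1}-s_j) = O(\dt/h) = O(1)$. The only subtle estimate is $\|(J^s_s)^{-1}\|_\infty = O(1)$: I would obtain it by observing that $J^s_s = \Phi + \tfrac{\dt}{2}\tfrac{a}{m_c}C + \tfrac{d}{2}\tfrac{\dt}{h^2}L_\varphi$ has exactly the structure of the scalar Jacobian $F'$ in \eqref{Jf2}, except that the identity is replaced by the diagonal matrix $\Phi$ and the tridiagonal $Y_N$ contribution is absent; one can then adapt Proposition \ref{prop:Fprimo} by using that $\Phi \geq \beta I$ (this is where the hypothesis $\beta > 0$ enters) so that $\Phi$ plays the role of the identity in the estimates.

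Multiplying these bounds together yields $\|P^{-1}J - I\|_\infty = O(\dt) \to 0$, which forces the spectrum of $P^{-1}J$ to lie in a disk around $1$ of vanishing radius; the standard GMRES residual bound (minimisation of a normalised polynomial over the spectrum) then gives an iteration count uniformly bounded in $N$, i.e.\ optimality of $P$. The hardest step will be the uniform bound on $\|(J^s_s)^{-1}\|_\infty$: this is precisely where the assumption $\beta > 0$ cannot be waived, for otherwise $\Phi$ would degenerate, the adaptation of Proposition \ref{prop:Fprimo} would fail, and the whole chain of estimates would collapse; all the other estimates are essentially entry-wise and robust.
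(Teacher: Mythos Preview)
Your proposal is correct and follows essentially the same route as the paper: compute $P^{-1}J$ block-wise, observe that the off-identity blocks are $O(\dt)$ once $\|(J^s_s)^{-1}\|_\infty=O(1)$ is established, and conclude strong spectral clustering at $1$. The only difference is in how the key bound $\|(J^s_s)^{-1}\|_\infty=O(1)$ is obtained: you defer to an adaptation of Proposition~\ref{prop:Fprimo} via $\Phi\geq\beta I$, whereas the paper carries out a self-contained diagonal/off-diagonal splitting $J^s_s=Z-W$ and a Neumann-series estimate ($\|Z^{-1}\|_\infty=O(h^2/\dt)$, $\|Z^{-1}W\|_\infty\leq 1-Ch$), which makes the role of $\beta>0$ explicit through the lower bound on $\varphi_k$ in the denominator.
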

\begin{proof}
First observe that the diagonal blocks are nonsingular, so that
\[
P^{-1} = 
\left[\begin{array}{c|c}
    (J^s_s)^{-1}&-(J^s_s)^{-1}(J^s_c)(J^c_c)^{-1}\\\hline{\bf0}&(J^c_c)^{-1}
\end{array}
\right]
\]
and the preconditioned system has matrix
\[ 
P^{-1}J =
 \left[\begin{array}{c|c}
{\bf 1} -\left(J^s_s\right)^{-1}\left(J^s_c\right)\left(J^c_c\right)^{-1}\left(J^c_s\right)
 & {\bf 0} \\
\hline \left(J^c_c\right)^{-1} \left(J^{c}_s\right) & 
{\bf 1}
\end{array}
\right]
\]
where ${\bf0}$ denotes the null matrix and ${\bf1}$ the identity
matrix.

We now show that all entries of $P^{-1}J$ are negligible except the
diagonal ones. In fact $J^c_c$ is diagonal with entries equal to
$1+O(\dt)$ and thus its inverse has the same property. Since $J^c_s$
is tridiagonal with entries of $O(a\dt)$, the same is true for the
lower-left block of $P^{-1}J$. Gershgorin circles arising from the
lower half of the matrix are thus centred at $1$ in the complex plane
and have radii decaying as $O(\dt)$.

We now turn to consider the upper half of the matrix, where we observe
that
$\|\left(J^s_c\right)\left(J^c_c\right)^{-1}\left(J^c_s\right)\|_\infty=O(\dt)$
and thus it suffices to show that $\|\left(J^s_s\right)^{-1}\|_\infty$
is bounded to conclude that the Gershgorin circles arising from the
upper half of the matrix are centred at $1+O(\dt)$ in the complex
plane and have radii decaying as $O(\dt)$.

To this end, split
\[ J^s_s = Z -W  = Z({\bf1}-Z^{-1}W)\]
where $Z$ is the diagonal part, which is
\[ Z = \diag_k(z_k), \qquad
z_k=\varphi_k+\dt\frac{a}{m_c}\widetilde{\varphi}_k
+2\frac{\dt}{h^2}\varphi_k 
\] 
where $\varphi_k=\tfrac12(\varphi_{k+1/2}+\varphi_{k-1/2})$ and
$\widetilde{\varphi}_k=\tfrac12(\varphi_{k+1/2}c_{k+1/2}+\varphi_{k-1/2}c_{k-1/2})$.
Since $Z$ is diagonal, we easily get the estimate
\[
\|Z^{-1}\|_\infty 
\leq \max_k \frac{1}{z_k}
=\frac{h^2}{\dt} \max_k
\frac{1}{2\varphi_k}\left(1+O\left(\tfrac{h^2}{\dt}\right)\right)
=O\left(\tfrac{h^2}{\dt}\right)
\]

Next observe that
$Z^{-1}W=\tfrac{\dt}{h^2}\tridiag_k(\tfrac{1}{z_k}[\varphi_{k-1/2},0,\varphi_{k+1/2}])$
and thus 
\[ \left\|Z^{-1}W\right\|_\infty 
= \left\|
  \tridiag_k\left(
    \frac{[\varphi_{k-1/2},0,\varphi_{k+1/2}]}%
    {2\varphi_k
      +\tfrac{h^2}{\dt}\varphi_k
      +h^2\tfrac{a}{m_c}\widetilde{\varphi}_k
    }
  \right)
\right\|_\infty 
\leq \max_k \frac{2\varphi_k}{2\varphi_k
  +\tfrac{h^2}{\dt}\varphi_k
  +h^2\tfrac{a}{m_c}\widetilde{\varphi}_k
}
\leq 1-Ch
\]
for some small positive constant $C$.
Therefore 
\[
\left\|\left({\bf1}-Z^{-1}W\right)^{-1}\right\|_\infty
\leq \sum_{j=0}^\infty \left\|Z^{-1}W\right\|_\infty^j
\leq \frac{1}{Ch} 
\]
and
\[ 
\|(J^s_s)^{-1}\|_\infty 
= \left\|\left(Z\left({\bf1}-Z^{-1}W\right)\right)^{-1}\right\|_\infty
=\left\|\left({\bf1}-Z^{-1}W\right)^{-1}\right\|_\infty
\left\| Z^{-1}\right\|_\infty
=O(1)
\]
\end{proof}

\begin{remark}
  When applying the preconditioner, the block triangular system
  $P\vec{y}=\vec{b}$ is solved as
  \[ \vec{y}_c = (J^c_c)^{-1}\vec{b}_c \qquad \vec{y}_s =
  (J^s_s)^{-1}(\vec{b}_s-J^{s}_c\vec{y}_c) 
  \]
  where the deponents $s$ and $c$ refer to the upper and, respectively,
  the lower half of the vectors.
  The previous result shows that the spectrum of $P^{-1}J$ is strongly
  clustered at $1$ independently on the discretization parameter $h$
  and we expect the block-preconditioner $P$ to be optimal.  For the
  whole preconditioner to be optimal, however we need an optimal
  solver for the $J^s_s$ block. However $J^s_s$ is the sum of two
  diagonal matrices and a tridiagonal matrix which is the
  discretization of a laplacian operator, regularised with the
  (strictly positive) function $\varphi(c(x))$, and thus has spectral
  properties close to those of $X_N$ studied in Section
  \ref{ssec:alglin}. As in the scalar case, a MGM (e.g. with 1 damped
  Jacobi as presmoother and a Galerkin approach with linear
  interpolation) is an optimal solver for this block.
\end{remark}

\begin{figure}
\begin{tabular}{cc}
\includegraphics[width=0.45\textwidth]{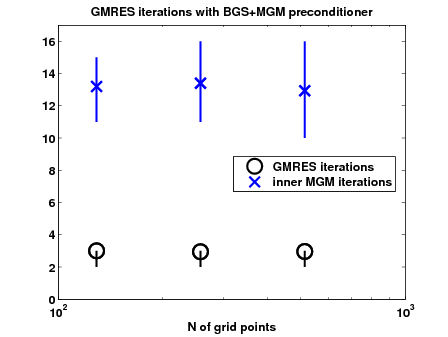}
&
\includegraphics[width=0.45\textwidth]{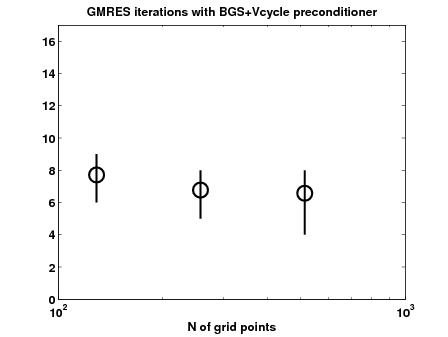}
\\
{\bf(a)} & {\bf(b)}
\end{tabular}  
\caption{ Number (average, min and max) of GMRES iterates per timestep
  with $A=1$, for 3 values of $N$. (a) Block Gauss-Seidel and MGM for
  the upper left block was used as a preconditioner. Blue symbols and
  lines refer to the number of inner MGM iterations. (b) GMRES
  iterations when performing only 1 V-cycle of the MGM.}
\label{fig:NAT:GMRESMGM}
\end{figure}

Our preconditioner is thus the Gauss-Seidel preconditioner at block
level, with MGM on the $(s,s)$ block. The $(c,c)$ block does not need
an inner preconditioner since it is diagonal and can be solved
directly.  This strategy yields an optimal preconditioner,
i.e. renders the number of GMRES iterations independent from $N$, as
confirmed by the numerical experiments shown in Figure
\ref{fig:NAT:GMRESMGM}). In the panel \ref{fig:NAT:GMRESMGM}a we
employ MGM driven to convergence as solver for the $(s,s)$ block in
the preconditioner: GMRES converges in 2-3 iterations, requiring 10-15
MGM cycles at each iteration. In panel panel \ref{fig:NAT:GMRESMGM}a
we employ only a single MGM V-cycle as inner preconditioner for the
$(s,s)$ block: the number of GMRES iterations grows slightly (6--8),
but this procedure is overall more efficient.  We observe an
impressive series of good features: minimal average computational
cost, minimal number of iterations, minimal variance in the latter
number meaning a strong robustness of the procedure.

\subsection{Simulations and performance of the algorithm}
\begin{figure}
\begin{tabular}{cc}
  \includegraphics[width=0.45\textwidth]{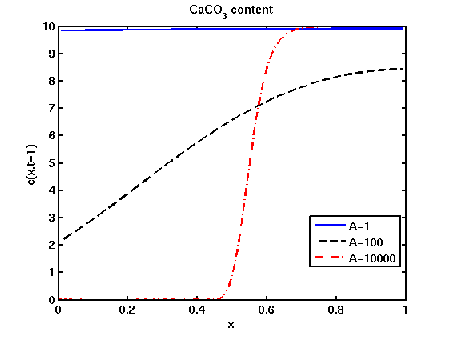}
  &
  \includegraphics[width=0.45\textwidth]{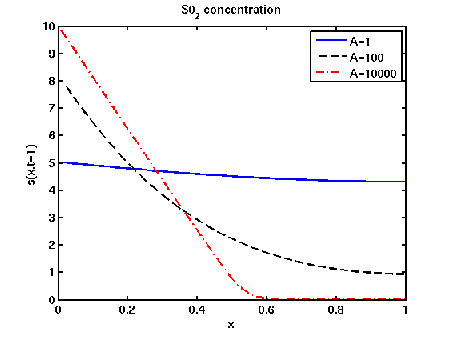}
  \\
  {\bf(a)} & {\bf(b)}
\end{tabular}
\caption{For different values of $a$, marble content and
  $\mathrm{SO}_2$ concentration inside the stone predicted by the
  model \eqref{eq:natalini} for $t=1$.}
\label{fig:natAvario}
\end{figure}

\begin{figure}
  \begin{tabular}{cc}
    \includegraphics[width=0.45\textwidth]{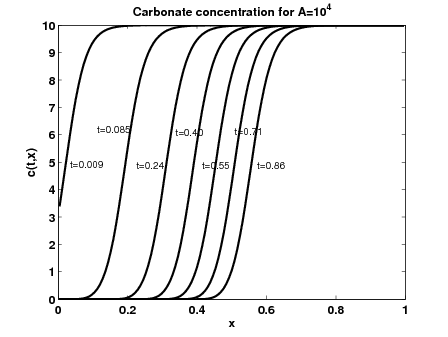}
    &
    \includegraphics[width=0.45\textwidth]{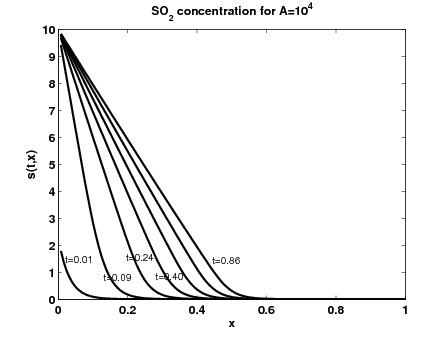}
    \\
    {\bf(a)} & {\bf(b)}
  \end{tabular}
  \caption{Temporal evolution of the calcium carbonate and sulfate
    concentration predicted by the model \eqref{eq:natalini} with
    $a=10^4$.}
\label{fig:film}
\end{figure}

In Figure \ref{fig:natAvario} we plot some typical curves obtained
from the simulations with the model \eqref{eq:natalini}. Note that
for bigger values $a$, the reaction is faster and a boundary layer
appears. For $a=10^4$, Figure \ref{fig:film} shows the temporal
evolution of the two main variables: while $\mathrm{SO_2}$
penetrates deeper and deeper into the stone (b), calcium
carbonates is substituted by the more porous gypsum in a narrow
spatial band where the curve $c(t,x)$ presents a boundary layer.
Once formed, this transition region travels towards the interior of
the stone (a). The self-similarity of the solutions of
\eqref{eq:natalini} under rescaling of the temporal and spatial
variables mentioned at the beginning of Section \ref{sec:sulfation}
implies that the boundary layer observed for $a=10^4$ will also
appear for lower values of $a$, if the solutions were sought for a
larger temporal and spatial domain.

\paragraph{Newton iterations}
In Figure \ref{fig:solfNewton} we plot the average, minimum and
maximum number of Newton iterations used by the numerical method to
solve the nonlinear equation \eqref{eq:NATF} at each timestep.

\begin{figure}
\hfil
\includegraphics[width=0.45\textwidth]{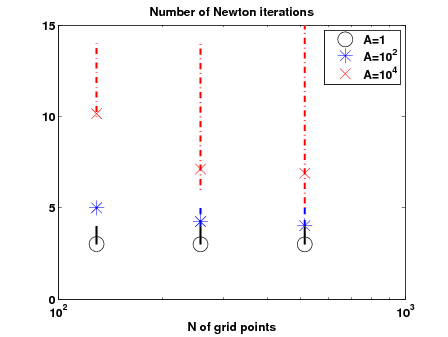}
\hfil 
\caption{Number (average, min and max) of Newton iterates per
  timestep.}
\label{fig:solfNewton}
\end{figure}

For $a=1$ (black circles) we note that the number of iterations is
almost constant when the number of grid points is increased.
Furthermore, for a given number of points employed in the
discretization, the number of Newton iterations increases very
moderately even when $a$ is increased by several orders of magnitude:
e.g. for $N=128$ we need an average of $3$ Newton iterations per
timestep for $a=1$, $5$ for $a=100$ (blue stars), and $10$ for
$a=10000$ (red crosses). Finally we point out that for higher values
of $N$, the number of Newton iterations decreases slightly since the
grid becomes able to resolve better the boundary layer.

\subsection{Asymptotics for the front position}
The asymptotic analysis of \cite{GN07} predicts that the front of the
travelling wave of $c(t,x)$ that separates the gypsum dominated phase
from the carbonate dominated phase and that moves inwards in the
marble sample asymptotically behaves as $x_{\text{front}}\sim\sqrt{t}$.

\begin{figure}
  \begin{tabular}{cc}
    \includegraphics[width=0.45\textwidth]{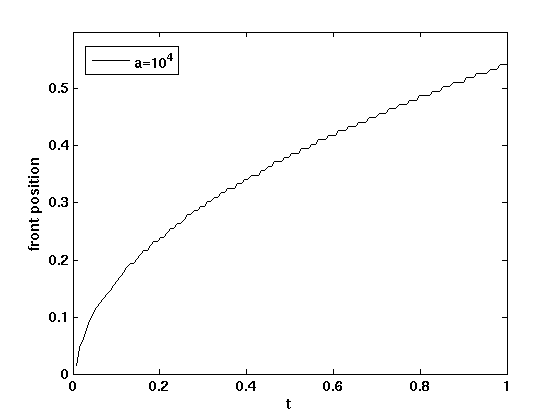}
    &
    \includegraphics[width=0.45\textwidth]{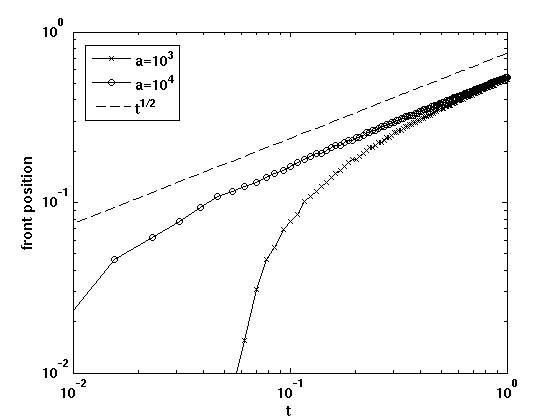}
    \\
    {\bf(a)}&{\bf(b)}
  \end{tabular}
  \caption{Gypsum-carbonate front position in a sulfation problem:
  numerical simulations (solid lines) and predicted
  asymptotics (dashed line) in b}
\label{fig:experiment}
\end{figure}

We performed numerical experiments to test this prediction and to
check how fast the front approaches this asymptotics.
In order to perform the comparison, we extracted the information on
the front position from the numerical solutions $c^n_{j+1/2}$ by
identifying the gypsum-carbonate front with the point with steepest
gradient of $c(t^n,x)$. 

For $a=10^4$, Figure \ref{fig:experiment}a shows the position of the
front.  Note that the step-like behaviour of the numerical front that
is apparent in some regions of the graph is due to the finite spatial
resolution of the simulation ($h=1/128$).  In order to check the
asymptotics, we plot the position also in double logarithmic scale in
Figure \ref{fig:experiment}b, together with the $\sqrt{t}$ slope
(dashed line). We note that both simulations agree with the slope of
the asymptotics and that for the smaller value of $a$, the solution
approaches the asymptotics more slowly.

\subsection{Sample application in 2D}
\begin{figure}
\begin{center}
\hfil
\includegraphics[width=0.8\textwidth]{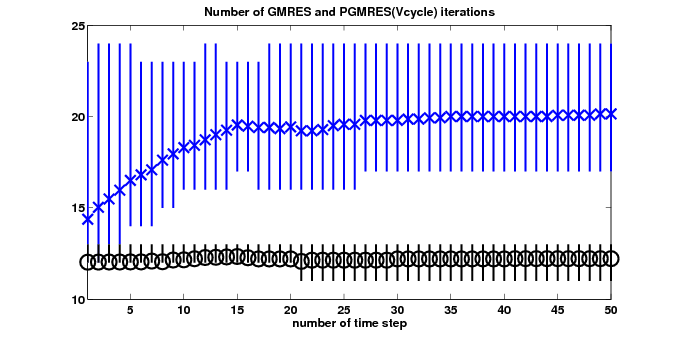}
\hfil
\end{center}
\caption{Number of GMRES iterations per Newton step: for each timestep
we plot the average, minimum and maximum number of linear
iterations. Without preconditioner: blue, crosses. With V-cycle
preconditioner: black, circles.}
\label{fig:SOLF2DPGMRES}
\end{figure}

In this section we present a numerical simulation of equations
\eqref{eq:natalini} in the two dimensional setting of figure
\ref{fig:domain}. We consider again two staggered quadrangular regular
grids in $\Omega=[0,1]\times[0,1]$. When using $N$ points per
direction, we denote $x_\xi=\xi/N$ and $y_\xi=\xi/N$. We generalize
the construction of Section \ref{ssec:model:discr} considering two
staggered grids: the {\em integer grid} is the set of points
\(\{(x_i,y_j)\}_{i,j=0}^N\) carrying the values $s_{i,j}$ of the
$\mathrm{SO_2}$ concentration field $s(x,y)$ and the {\em half-integer
  grid} is the set \(\{(x_{i+1/2},y_{j+1/2})\}_{i,j=1}^N\) carrying
the values $c_{i+1/2,j+1/2}$ of the calcium carbonate concentration
field. The discretization of the elliptic operator is then generalized
in the usual way to the two dimensional setting, and the new form of
the fixed point problem \eqref{eq:NATF} and its laplacian
\eqref{eq:NATJ} are derived. The numerical scheme now requires, at
each timestep, the solution of a system of $2N^2$ nonlinear
equations. Using again the Newton method, at each iteration we need to
solve a sparse linear system with a matrix of dimension
$2N^2\times2N^2$.

The Jacobian matrix has the same block structure described in the
one-dimensional case (see also Figure \ref{fig:sparsity}).  Since it
is not symmetric, we use GMRES as main Krylov solver with specialised
structured preconditioners as in the one dimensional case.  The main
difference in fact is that now the $J^s_s$ block is a weighted
two-dimensional laplacian plus diagonal corrections that are small (in
the sense of order of $h$). The $J^c_c$ block remains diagonal with
elements equal to $1+O(\dt)$ and the preconditioner \eqref{eq:Pnat}
can be applied. The off-diagonal blocks have more non-zero diagonals,
but their elements are still small and Theorem \ref{teo:Pnat} can be
generalized to the two-dimensional setting.

Here we consider only the best preconditioner of those evaluated in
the one dimensional setting, namely the upper triangular part of the
Jacobian matrix, where we perform only 1 V-cycle on the $(s,s)$ block.
In Figure \ref{fig:SOLF2DPGMRES} we study the effectiveness of this
preconditioning technique. On a $32\times32$ grid, we observe that
unpreconditioned GMRES requires an average of $15$ to $20$ iterations
to solve the Jacobian linear system in each Newton step, with frequent
peaks of $24$ iterations (blue crosses in the figure are the average
values, blue lines the minimum to maximum range). Moreover the number
of iterations is not constant but depends on the timestep. On the
contrary the preconditioned method employs always an average of $12$
PGMRES iterations, with little variability both within the time step
and across the different times (black circles and lines).

\begin{figure}
\begin{center}
\hfil
\includegraphics[width=0.8\textwidth]{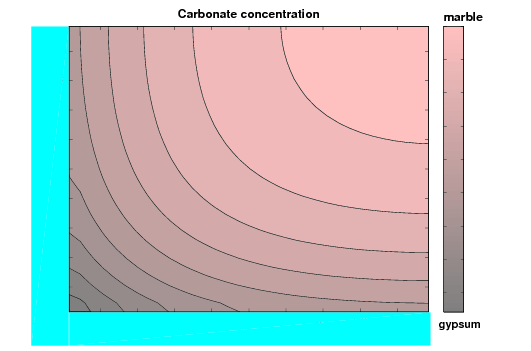}
\hfil
\end{center}
\caption{Simulation of marble sulfation in two dimensions.}
\label{fig:SOLF2D}
\end{figure}

In Figure \ref{fig:SOLF2D} we plot the solution obtained for
$a=10$. We recall the the marble is in contact with the polluted air at
the bottom and left boundary (cyan regions), while at the top and
right boundary we apply free flow conditions. Both the colour code and
the isolines refer to the carbonate concentration in the stone. We
observe a clear deformation of the $\mathrm{CaCO_3}$ field near the
corner, clearly indicating that $\mathrm{SO_2}$, penetrating from both
sides, causes an enhanced loss of material: if the gypsum crust were to
fall off here, the sharp edge would be chipped off and 
the shape of the stone would be permanently changed.
This simulation, although performed at low resolution ($32\times32$
grids) and with a moderate value of $a$, already indicates the
relevance of our project of developing accurate numerical simulators
for realistic geometries of the domain $\Omega$ in two and three
dimensions.

\section{Conclusions and future developments}
\label{sec:conclusions}

The novel contribution of this paper relied in the proposal of a fully
implicit numerical method for dealing with the nonlinear PDE, in its
convergence and stability analysis, and in the study of the related
computational cost. Indeed the nonlinear nature of the underlying
mathematical model required the application of a fixed point scheme.
We have identified the classical Newton method in which, at every
step, the solution of a large, locally structured, linear system has
been handled by using specialised iterative or multi-iterative
solvers. In particular, the spectral analysis of the relevant matrices
has been crucial for identifying appropriate preconditioned Krylov
methods with efficient V-cycle preconditioners.  Numerical experiments
for the validation of our analysis complement this contribution, which
is aimed to provide a non-invasive tool for a quantitative forecast of
the damage evolution in a given monument.

In particular we considered the application of the above-mentioned
techniques to the numerical approximation of a mathematical model
describing the damage of marble monuments by the sulfation
process. The use of our resulting fast integration algorithms allows
to exploit the model and its predictive power for the strategy known
as {\em planned conservation}, that is the novel approach that
privileges the study and prevention of the damages to delay and
optimise the actual restoration works. We showed both one dimensional
and two dimensional numerical simulations, using simple domains.

For future work, two main directions appear naturally. The first is to
finite element methods for the space discretization in order to deal
with more realistic domains in 2D and 3D, that can model a real
architectural item with a complicate geometry. In this setting we
expect algebraic (linear) MGM or even nonlinear multigrid (FAS) to
play an important role.  The other natural extension of the numerical
treatment for the sulfation problem involves considering the
3-equations model in \cite{AFNT07:model} and/or a model for a
remediation technique, like the one in \cite{CGNNS:teos} (this will
include consolidation models that is systems of the type
$u_t=(D(u)(p(u)_x))_x$). As a long term goal, being able to simulate
both the damage and the remediation process with validated
mathematical models and numerical methods would allow to perform
numerical experiments of restoration works.

\bibliographystyle{alpha}
\bibliography{MonumentsCL}

\newcommand{\etalchar}[1]{$^{#1}$}
\begin{thebibliography}{CGN{\etalchar{+}}09}

\bibitem[ADDN04]{ADN:sulfation}
D.~Aregba~Driollet, F.~Diele, and R.~Natalini.
\newblock A mathematical model for the {$\mathrm{SO}_2$} aggression to calcium
  carbonate stones: numerical approximation and asymptotic analysis.
\newblock {\em SIAM J. Appl. Math.}, 64(5):1636--1667, 2004.

\bibitem[AFNT07]{AFNT07:model}
G.~Al{\`i}, V.~Furuholt, R.~Natalini, and I.~Torcicollo.
\newblock A mathematical model of sulphite chemical aggression of limestones
  with high permeability. {I}. {M}odeling and qualitative analysis.
\newblock {\em Transp. Porous Media}, 69(1):109--122, 2007.

\bibitem[BBR79]{BBR79}
A.E. Berger, H.~Brezis, and J.C.W Rogers.
\newblock A numerical method for solving the problem {$u_t-\Delta f(u)=0$}.
\newblock {\em RAIRO numerical analysis}, 13:297--312, 1979.

\bibitem[BLTR00]{BLR00}
R.~Bugini, M.~Laurenzi~Tabasso, and M.~Realini.
\newblock Rate of formation of black crusts on marble. {A} case study.
\newblock {\em J. Cultural Heritage}, pages 111--116, 2000.

\bibitem[BP72]{BP72}
H.~Br{\'e}zis and A.~Pazy.
\newblock Convergence and approximation of semigroups of nonlinear operators in
  {B}anach spaces.
\newblock {\em J. Functional Analysis}, 9:63--74, 1972.

\bibitem[CGN{\etalchar{+}}09]{CGNNS:teos}
F.~Clarelli, C.~Giavarini, R.~Natalini, C.~Nitsch, and M.L Santarelli.
\newblock Mathematical models for the consolidation processes in stones.
\newblock In {\em Proc. of ``{I}nternational {S}ymposium: {S}tone
  {C}onsolidation in {C}ultural {H}eritage - research and practice''.
  {L}isbona, {M}ay 2008.}, 2009.
\newblock to appear.

\bibitem[CL71]{CL71}
M.G. Crandall and T.M. Liggett.
\newblock Generation of {S}emi-{G}roups of non linear transformations on
  general {B}anach spaces.
\newblock {\em Amer. J. Math.}, 93:265--298, 1971.

\bibitem[CNPS07]{CNPS07:degdiff}
F.~Cavalli, G.~Naldi, G.~Puppo, and M.~Semplice.
\newblock High-order relaxation schemes for non linear degenerate diffusion
  problems.
\newblock {\em SIAM Journal on Numerical Analysis}, 45(5):2098--2119, 2007.

\bibitem[GKPC89]{GKPC89}
K.L. Gauri, N.P. Kulshreshtha, A.R. Punuru, and A.N. Chowdhury.
\newblock Rate of decay of marble in laboratory and outdoor exposure.
\newblock {\em J. Mater. Civil Eng.}, pages 73--85, 1989.

\bibitem[GN05]{GN05}
F.~R. Guarguaglini and R.~Natalini.
\newblock Global existence of solutions to a nonlinear model of sulphation
  phenomena in calcium carbonate stones.
\newblock {\em Nonlinear Anal. Real World Appl.}, 6(3):477--494, 2005.

\bibitem[GN07]{GN07}
F.~R. Guarguaglini and R.~Natalini.
\newblock Fast reaction limit and large time behavior of solutions to a
  nonlinear model of sulphation phenomena.
\newblock {\em Comm. Partial Differential Equations}, 32(1-3):163--189, 2007.

\bibitem[Gre97]{Greenbaum:book}
Anne Greenbaum.
\newblock {\em Iterative methods for solving linear systems}, volume~17 of {\em
  Frontiers in Applied Mathematics}.
\newblock Society for Industrial and Applied Mathematics (SIAM), Philadelphia,
  PA, 1997.

\bibitem[GSNF08]{GSNF08}
C.~Giavarini, M.L. Santarelli, R.~Natalini, and F.~Freddi.
\newblock A nonlinear model of sulphation of porous stones: numerical
  simulations and preliminary laboratory assessments.
\newblock {\em J. Cultural Heritage}, 9:14--22, 2008.

\bibitem[Hac85]{Hackbusch:book}
Wolfgang Hackbusch.
\newblock {\em Multigrid methods and applications}, volume~4 of {\em Springer
  Series in Computational Mathematics}.
\newblock Springer-Verlag, Berlin, 1985.

\bibitem[Hay82]{Hay82}
F.H. Hayne.
\newblock Deterioration of marble.
\newblock {\em Durability Build. Mater.}, (1):241--254, 1982.

\bibitem[Lip89]{Lip89}
W.T. Lipfert.
\newblock Atmospheric damage to calcareous stones: comparison and
  reconciliation of recent experimental findings.
\newblock {\em Atmos. Environ.}, 23:415--429, 1989.

\bibitem[MNV87]{MNV87}
E.~Magenes, R.~H. Nochetto, and C.~Verdi.
\newblock Energy error estimates for a linear scheme to approximate nonlinear
  parabolic problems.
\newblock {\em RAIRO Mod{\'e}l. Math. Anal. Num{\'e}r.}, 21(4):655--678, 1987.

\bibitem[OR70]{Ortega:book}
J.~M. Ortega and W.~C. Rheinboldt.
\newblock {\em Iterative solution of nonlinear equations in several variables}.
\newblock Academic Press, New York, 1970.

\bibitem[Saa03]{Saad:book}
Yousef Saad.
\newblock {\em Iterative methods for sparse linear systems}.
\newblock Society for Industrial and Applied Mathematics, Philadelphia, PA,
  second edition, 2003.

\bibitem[SC93]{Serra93:multi}
S.~Serra-Capizzano.
\newblock Multi-iterative methods.
\newblock {\em Comput. Math. Appl.}, 26(4):65--87, 1993.

\bibitem[SC06]{Serra06:glt}
Stefano Serra-Capizzano.
\newblock The {GLT} class as a generalized {F}ourier analysis and applications.
\newblock {\em Linear Algebra Appl.}, 419(1):180--233, 2006.

\bibitem[SDSC]{SDS:degdiff}
M.~Semplice, M.~Donatelli, and S.~Serra-Capizzano.
\newblock Multigrid and preconditioning strategies for implicit {PDE} solvers
  for degenerate parabolic equations.
\newblock {\em IMA}.
\newblock Submitted. Preprint arXiv:0907.2600v2.

\bibitem[SDSC09]{SDS:monum}
M.~Semplice, M.~Donatelli, and S.~Serra-Capizzano.
\newblock Preconditioned fully implicit pde solvers for degenerate parabolic
  equations with applications to monument conservation.
\newblock {\em http:{$\backslash\backslash$}www.arXiv.org}, (0907.2600v1), 15
  July 2009.

\bibitem[Til98]{Tilli98}
P.~Tilli.
\newblock Locally {T}oeplitz sequences: spectral properties and applications.
\newblock {\em Linear Algebra Appl.}, 278(1-3):91--120, 1998.

\bibitem[TOS01]{Trottenberg:book}
U.~Trottenberg, C.~W. Oosterlee, and A.~Sch{\"u}ller.
\newblock {\em Multigrid}.
\newblock Academic Press Inc., San Diego, CA, 2001.
\newblock With contributions by A. Brandt, P. Oswald and K. St{\"u}ben.

\bibitem[V\'07]{VazquezBOOK}
J.~L. V\'azquez.
\newblock {\em The porous medium equation}.
\newblock Oxford Mathematical Monographs. The Clarendon Press Oxford University
  Press, Oxford, 2007.
\newblock Mathematical theory.

\end{thebibliography}

\end{document}